\documentclass[a4paper,10pt,reqno]{amsart}

\usepackage[UKenglish]{babel}

\usepackage{amsmath}
\usepackage{amssymb}
\usepackage{amsthm}
\usepackage{verbatim}
\usepackage{stackrel, etoolbox}

\usepackage[shortlabels]{enumitem}
\usepackage{comment,cite}	
\usepackage{hyperref}
\hypersetup{
    colorlinks=true,
    linkcolor=ecvrulecolor,
    citecolor=purple,
    filecolor=magenta,      
    urlcolor=cyan,
    pdftitle={ISS in integral norms},
    linktocpage=true,
}

\usepackage[utf8]{inputenc}

\usepackage{amssymb}
\usepackage{amsmath}
\usepackage{amsthm, mathrsfs, mathabx}
\usepackage{bbm}
\usepackage{verbatim, stackrel}

\usepackage[shortlabels]{enumitem}
\usepackage{marginnote}
\usepackage{color}

\usepackage{stmaryrd}
\usepackage{tikz}
\usepackage{tikz-cd}


\usetikzlibrary{arrows,graphs,decorations.markings,decorations.pathmorphing,calc, positioning}

\tikzset{
    admnode/.style={
        minimum height=0.5cm,
        minimum width=2cm
    },
    impl/.style={
        thick,double equal sign distance,->,>=implies
    },
    implback/.style={
        thick,double equal sign distance,<-,>=implies
    },
    impleq/.style={
        thick,double equal sign distance,<->,>=implies
    },
    counterexample/.style={
        red,thick,double equal sign distance,>=implies,degil
    },
    degil/.style={
      decoration={
        markings,
        mark=at position 0.5 with {\node[transform shape] {$/$};}
      },
      postaction={decorate}
    }
}
\tikzset{
    hyp/.style={densely dotted},
    fail/.style={dash pattern=on 2pt off 1.3pt}
}

\theoremstyle{plain}

\theoremstyle{definition}
\newtheorem{definition}{Definition}[section]
\newtheorem{remark}[definition]{Remark}
\newtheorem{remarks}[definition]{Remarks}
\newtheorem*{remark*}{Remark}
\newtheorem*{remarks*}{Remarks}
\newtheorem{example}[definition]{Example}

\newtheorem{proposition}[definition]{Proposition}
\newtheorem{lemma}[definition]{Lemma}
\newtheorem{theorem}[definition]{Theorem}
\newtheorem{corollary}[definition]{Corollary}

\newtheorem{setting}[definition]{Setting}

\numberwithin{equation}{section} 

\newcommand\xqed[1]{%
  \leavevmode\unskip\penalty9999 \hbox{}\nobreak\hfill
  \quad\hbox{#1}}
\newcommand\newqed{\xqed{$\boxdot$}}

\AtEndEnvironment{setting}{\null\hfill\newqed}%
\AtEndEnvironment{example}{\null\hfill\newqed}%
\AtEndEnvironment{remark}{\null\hfill\newqed}%
\AtEndEnvironment{remarks}{\null\hfill\newqed}%

\makeatletter
\DeclareOldFontCommand{\rm}{\normalfont\rmfamily}{\mathrm}
\DeclareOldFontCommand{\sf}{\normalfont\sffamily}{\mathsf}
\DeclareOldFontCommand{\tt}{\normalfont\ttfamily}{\mathtt}
\DeclareOldFontCommand{\bf}{\normalfont\bfseries}{\mathbf}
\DeclareOldFontCommand{\it}{\normalfont\itshape}{\mathit}
\DeclareOldFontCommand{\sl}{\normalfont\slshape}{\@nomath\sl}
\DeclareOldFontCommand{\sc}{\normalfont\scshape}{\@nomath\sc}
\makeatother

\usepackage{csquotes}  
\newcommand\q{\enquote}





%
%


%

%
%

%
\DeclareMathOperator{\loc}{loc}

\DeclareMathOperator*{\esssup}{ess\,sup}







\newcommand \N   {\mathbb{N}}
\newcommand \R   {\mathbb{R}}

%



\newcommand \Kinf{\mathcal{K_\infty}}

\newcommand \KL  {\mathcal{KL}}


\newcommand{\vertiii}[1]{{\left\vert\kern-0.25ex\left\vert\kern-0.25ex\left\vert #1 
    \right\vert\kern-0.25ex\right\vert\kern-0.25ex\right\vert}}




\newcommand \qrq   {\quad\Rightarrow\quad}





\newcommand{\normt}[1]{{\left\vert\kern-0.25ex\left\vert\kern-0.25ex\left\vert #1 
		\right\vert\kern-0.25ex\right\vert\kern-0.25ex\right\vert}}





%



\usepackage{color,xcolor}
\definecolor{ecvrulecolor}{cmyk}{0.72,0.30,0,0.21} 
\definecolor{ecvsectioncolor}{cmyk}{0.91,0.56,0,0.42} 
\definecolor{ecvhighlightcolor}{cmyk}{0.90,0.28,0,0.20}
\definecolor{ecvtextcolor}{cmyk}{0,0.06,0.10,0.76}














\newif\ifMath					
\newif\ifEngi					

\newif\ifDFGtext					 

\newif\ifAndo              
													
\newif\ifExercises					
\newif\ifSolutions          
\newif\ifGerman							
\newif\ifEnglish						

\newif\ifnothabil						

\newif\ifFuture							

\newif\ifConf                    
\newif\ifJournal								 

\newif\ifNOTFORBOOK
\newif\ifFullVersion
\newif\ifExludedDueToSpaceReasons

\usepackage{xifthen}

\newcommand{\einsnorm}[2]{\ensuremath{
    \!\!\;\!\!\!\;
    \left\bracevert\!\!\!\!\!\left\bracevert
    \!
		\ifthenelse{\isempty{#2}}{#1}{#1(#2)}
    \!
      \right\bracevert\!\!\!\!\!\right\bracevert
    \!\!\;\!\!\!\;
  }}






\usepackage{xcolor}
\definecolor{blond}{rgb}{0.98, 0.94, 0.75}
	
\newlength\mytemplen
\newsavebox\mytempbox

\makeatletter
\newcommand\mybluebox{%
    \@ifnextchar[
       {\@mybluebox}%
       {\@mybluebox[0pt]}}

\def\@mybluebox[#1]{%
    \@ifnextchar[
       {\@@mybluebox[#1]}%
       {\@@mybluebox[#1][0pt]}}

\def\@@mybluebox[#1][#2]#3{
    \sbox\mytempbox{#3}%
    \mytemplen\ht\mytempbox
    \advance\mytemplen #1\relax
    \ht\mytempbox\mytemplen
    \mytemplen\dp\mytempbox
    \advance\mytemplen #2\relax
    \dp\mytempbox\mytemplen
    \colorbox{blond}{\hspace{1em}\usebox{\mytempbox}\hspace{1em}}}

\makeatother


\makeatletter
\let\origd=\d
\renewcommand*\d{
  \relax\ifmmode
    \mathrm{d}%
  \else
    \expandafter\origd
  \fi
}\makeatother

\usepackage{mathtools}



\makeatletter 
\newcommand{\pushright}[1]{\ifmeasuring@#1\else\omit\hfill$\displaystyle#1$\fi\ignorespaces}
\newcommand{\pushleft}[1]{\ifmeasuring@#1\else\omit$\displaystyle#1$\hfill\fi\ignorespaces}
\makeatother 

\newcounter{syscounter}

\newcounter{WPcounter}
\newcounter{PRcounter}

\newcommand{\bbC}{\mathbb{C}}

\newcommand{\bbN}{\mathbb{N}}

\newcommand{\bbR}{\mathbb{R}}

\newcommand{\calK}{\mathcal{K}}
\newcommand{\calL}{\mathcal{L}}

\DeclareMathOperator{\one}{{\mathbbm{1}}} 
\DeclareMathOperator{\re}{Re} 
\newcommand{\argument}{\mathord{\,\cdot\,}} 
\newcommand{\dx}{\;\mathrm{d}} 
\newcommand{\norm}[1]{\left\lVert #1 \right\rVert} 
\newcommand{\modulus}[1]{\left\lvert #1 \right\rvert} 
\newcommand{\duality}[2]{\left\langle#1\, ,\, #2\right\rangle} 
\newcommand{\dom}[1]{\operatorname{dom}\left(#1\right)} 
\DeclareMathOperator{\Ima}{Rg} 


\newcommand{\spec}{\sigma} 
\newcommand{\resSet}{\rho}
\newcommand{\Res}{\mathcal{R}} 




\begin{document}

\title[ISS in integral norms for linear infinite-dimensional systems]{Input-to-state stability in integral norms for linear infinite-dimensional systems}
\author{Sahiba Arora}
\address{Sahiba Arora, Leibniz Universität Hannover, Institut für Analysis, Welfengarten 1, 30167 Hannover, Germany}
\email{sahiba.arora@math.uni-hannover.de}

\author{Andrii Mironchenko}
\address{Andrii Mironchenko, Universit\"at Bayreuth, Mathematisches Institut, Universit\"atsstra\ss e 30, Bayreuth, Germany}
\email{andrii.mironchenko@uni-bayreuth.de}

\date{\today}

\begin{abstract}
    We study integral-to-integral input-to-state stability for infinite-dimensional linear systems with inputs and trajectories in $L^p$-spaces.
    We start by developing the corresponding admissibility theory for linear systems with unbounded input operators.
    While input-to-state stability is typically characterised by exponential stability and finite-time admissibility, we show that this equivalence does not extend directly to integral norms. For analytic semigroups, we establish a precise characterisation using maximal regularity theory. Additionally, we provide direct Lyapunov theorems and construct Lyapunov functions for $L^p$-$L^q$-ISS and demonstrate the results with examples, including diagonal systems and diffusion equations.
\end{abstract}

\keywords{infinite-dimensional systems; linear systems; input-to-state stability; admissibility}

\subjclass{93C25; 37B25; 93D09; 93D20; 93C20; 37C75}

\maketitle

\section{Introduction}

\subsection*{ISS theory} 

Classical input-to-state stability (ISS) \cite{Sontag1989, SontagWang1995} gives pointwise bounds on system trajectories in terms of the maximum magnitude of the input. Over time, it became evident that analysing stability with respect to integral norms is crucial, leading E. Sontag in \cite{Sontag1998} to introduce two key notions: integral input-to-state stability (iISS) -- which relates pointwise bounds on the state norm to integral norms of the inputs -- and integral-to-integral input-to-state stability, which examines the relationship between the integral norms of the state and the input.

Integral ISS theory has been actively developed since its inception in \cite{Sontag1998,AngeliSontagWang2000b,AngeliSontagWang2000a} and has gained prominence in network analysis \cite{Ito2013}; for a comprehensive overview, see \cite{Sontag2008} and \cite[Chapter 4]{Mironchenko2023a}. For infinite-dimensional systems, integral ISS has been investigated in works such as \cite{JacobNabiullinPartingtonSchwenninger2018,Schwenninger2020,MironchenkoIto2015}.

Lyapunov functions play a key role in (integral) ISS analysis of finite- and infinite-dimensional systems \cite{MironchenkoPrieur2020, KarafyllisKrstic2019}. While for finite-dimensional systems, it is natural to use coercive Lyapunov functions, they have significant limitations for the stability analysis of infinite-dimensional systems. Serious obstructions on the way towards constructions of coercive Lyapunov functions for systems whose input operators are not $L^2$-admissible \cite{ZhengZhu2018, EspitiaKarafyllisKrstic2021, JacobMironchenkoPartingtonWirth2020}, have motivated the development of non-coercive Lyapunov theory for non-linear infinite-dimensional systems with disturbances in
\cite{MironchenkoWirth2019}. These results have been generalised to the ISS setting in \cite{JacobMironchenkoPartingtonWirth2020} and applied to the 
ISS analysis of a broad class of infinite-dimensional systems, including boundary control systems. Moreover, ISS Lyapunov methodology has been further developed for analysing ISS with respect to inputs from $L^p$-spaces \cite{Mironchenko2020}.

While both ISS and integral ISS provide pointwise estimates of the system trajectories, integral-to-integral ISS, which relates the integral norms of properly scaled trajectories to the input's integral norms, remains less explored. Sontag showed in \cite{Sontag1998} that for ODE systems with Lipschitz continuous non-linearities, ISS and integral-to-integral ISS are equivalent when the trajectory scaling is appropriately chosen. An infinite-dimensional counterpart of this result was later established in \cite{JacobMironchenkoPartingtonWirth2020}. However, when stability is analysed with respect to specific integral norms of trajectories, the problem ventures largely into uncharted territory.

\subsection*{Admissibility} 
A crucial role in the analysis of infinite-dimensional systems, particularly in the study of well-posed systems, is played by $L^p$-admissible control operators, which facilitate the investigation of stability, controllability, and observability \cite{JacobPartington2004,Staffans2005,TucsnakWeiss2014, TucsnakWeiss2009}. Their importance lies in providing the framework for linking system's inputs to state trajectories, making them an indispensable tool for system analysis. In the Hilbert space setting, admissibility for the case $p=2$ has been extensively studied, yielding a rich and well-developed theory \cite{JacobPartington2001,Staffans2005,TucsnakWeiss2009}. Additionally, significant progress has also been made for other values of $p<\infty$; see \cite{Haak2004,HaakLeMerdy2005,JacobPartingtonPott2014,Weiss1989b}. The case $p=\infty$ and admissibility with respect to continuous inputs has been the subject of more recent works \cite{JacobNabiullinPartingtonSchwenninger2018,JacobSchwenningerZwart2019, Wintermayr2019, AroraGlueckPaunonenSchwenninger2024, PreusslerSchwenninger2026}.
Notably, the admissibility of control operators is closely linked to the dual concept of admissible observation operators \cite{Weiss1989a, Weiss1989b, AroraSchwenninger2024}.
It is well-known \cite{JacobNabiullinPartingtonSchwenninger2018} that ISS for linear systems with $L^p$-inputs is equivalent to the exponential stability of the undisturbed system (semigroup) combined with finite-time $L^p$-admissibility.
This equivalence allows ISS of linear systems to be analysed using well-established methods from the stability theory of strongly continuous semigroups \cite{vanNeerven1996,EngelNagel2000} and the admissibility theory.

\subsection*{Contribution.} 
This paper investigates integral-to-integral input-to-state stability for linear infinite-dimensional systems with inputs and trajectories in $L^p$-spaces. 
Motivated by \cite{JacobNabiullinPartingtonSchwenninger2018}, we introduce the concept of $L^p$-$L^q$-admissibility. As in the ISS case, ISS in integral norms is equivalent to exponential stability of a semigroup together with infinite-time $L^p$-$L^q$-admissibility of a system. 
However, in contrast to the classical admissibility, for exponentially stable systems, infinite-time $L^p$-$L^q$-admissibility is no more equivalent to the (finite-time) $L^p$-$L^q$-admissibility in general, as demonstrated through a counterexample.
However, it turns out that $L^p$-$L^q$-admissibility is closely related to the concept of maximal regularity of Cauchy problems. Maximal regularity is a powerful tool in the theory of non-linear differential equations and has been extensively studied. For an overview, we refer the reader to \cite[Chapter~17]{HytoenenNeervenVeraarWeis2023} or the survey \cite{Dore1993}.
Exploiting the theory of maximal regularity, we establish in Section~\ref{sec:Linear systems and their admissibility} the desired equivalence of finite-time and infinite-time $L^p$-$L^q$-admissibility for analytic semigroups while considering the $L^p$-norms of both the input and the state trajectory. We conclude Section~\ref{sec:Linear systems and their admissibility} by summarising the relationships between various admissibility concepts.  

In Section~\ref{sec:iss-integral}, we proceed to the paper's main topic, $L^p$-$L^q$-input-to-state stability ($L^p$-$L^q$-ISS), and examine its fundamental properties. We explore the relationship between $L^p$-$L^q$-ISS and the admissibility concepts introduced in Section~\ref{sec:Linear systems and their admissibility}, providing insights into their interplay. Additionally, we analyse the special case where $q = p$, highlighting specific characteristics and implications.

Finally, in Section~\ref{sec:lyapunov}, we proceed to a Lyapunov-based analysis of ISS in integral norms. First, we give a characterisation for systems with bounded control operators. We then provide a general sufficient condition for $L^p$-$L^q$-ISS using non-coercive ISS Lyapunov functions. The applicability of these results is demonstrated on diagonal systems and diffusion equations with Dirichlet input. For systems with unbounded control operators, we establish a converse Lyapunov result for analytic semigroups utilising fractional Sobolev spaces.

\subsection*{Notation and Preliminaries}  

Let $\mathbb{R}_+:=[0,\infty)$ be the set of non-negative real numbers and $\one_A$ denote the characteristic function of a set $A$. For normed linear spaces $X$, $Y$, we denote by $\calL(X,Y)$ the space of bounded linear operators between $X$ and $Y$ and use the shorthand $\calL(X):=\calL(X,X)$. The range of a bounded operator $T$ is denoted by $\Ima T$. 
The notations $\dom A$ and $\resSet(A)$ are used for the domain and resolvent set of a closed linear operator $A$ respectively. Also, for $\lambda \in \resSet(A)$, we write $\Res(\lambda,A):=(\lambda-A)^{-1}$ for the resolvent of $A$ at $\lambda$.  For $p\in [1,\infty]$ and a Banach space $U$, we denote by $L^p(\bbR_+,U)$ the space of strongly measurable functions $u:\bbR_+\to U$ with
\[
    \|u\|_{L^p(\bbR_+,U)}:=
    \begin{cases}
        \left(\int_{\bbR_+}\norm{u(s)}^p_U\ ds\right)^{\frac{1}{p}} < \infty, \qquad & \text{if }p<\infty,\\
         \esssup_{s \in\bbR_+}\norm{u(s)}_U < \infty,                           \qquad & \text{if }p=\infty.
    \end{cases}
\]

A $C_0$-semigroup $(T(t))_{t\ge 0}$ with the generator $A$ on a Banach space $X$ is called exponentially stable if there exist $M,\omega>0$ such that $\norm{T(t)}\le Me^{-\omega t}$ for all $t\ge 0$. The extrapolation space $X_{-1}$ associated to $(T(t))_{t\ge 0}$ is defined as the completion of $X$ with respect to the norm 
$
    \norm{x}_{-1}:= \norm{\Res(\lambda, A)x}
$
for a fixed $\lambda \in \resSet(A)$.
The space $X_{-1}$ is a Banach space, and different choices of $\lambda \in \resSet(A)$ generate equivalent norms on $X_{-1}$.
Lifting of the state space $X$ to a larger space $X_{-1}$ brings further good news: the semigroup  $(T(t))_{t\ge 0}$ extends uniquely to a strongly continuous semigroup  $(T_{-1}(t))_{t\ge 0}$ on $X_{-1}$ whose generator $A_{-1}:X_{-1}\to X_{-1}$ is an extension of $A$ with $D(A_{-1}) = X$. For these and other facts about extrapolation spaces, we refer, for instance, to \cite[Section~II.5]{EngelNagel2000} or \cite[Chapter~III]{vanNeerven1992}. 

Further notation and terminology are introduced as and when they're needed.

\section{Linear systems and their admissibility}
    \label{sec:Linear systems and their admissibility}

Throughout this work, we operate under the following key assumptions, which serve as the foundation for our analysis and conclusions:
\begin{setting}
    \label{setting:system}
    Let $\Sigma(A,B)$ be a linear time-invariant system of the form
   \begin{equation}
   \label{Sigma(A,B)}
   \tag{\(\Sigma(A,B)\)}
\begin{split}
            \dot{x}(t) &= Ax(t)+Bu(t),\quad t\ge 0,\\
            x(0)       &= x_0;
\end{split}
\end{equation}
    where $A$ generates a $C_0$-semigroup $(T(t))_{t\ge 0}$ on a Banach space $X$ and $B\in \calL(U, X_{-1})$ for some Banach space $U$. 
\end{setting}

For $p\in [1,\infty]$, the \emph{input operator} associated to $\Sigma(A,B)$ is defined as
\[
    \Phi_{\tau}: L^p([0,\tau], U) \to X_{-1}, \quad u \mapsto \int_0^{\tau} T_{-1}(\tau-s) Bu(s)\dx s,
\]
for fixed $\tau\ge 0$. 
As $B\in \calL(U, X_{-1})$, the system $\Sigma(A,B)$ is well-posed in the extrapolation space $X_{-1}$, and its (mild) solution takes the form
\begin{equation}
    \label{eq:flow}
    \phi(\tau,x_0,u)=T(\tau)x_0+\Phi_{\tau} u \in X_{-1}
\end{equation}
for $\tau\ge 0$, $x_0\in X$, and $u \in  L^p([0,\tau], U)$. If the system is well-posed even in $X$, then it is called $L^p$-admissible, i.e., the system $\Sigma(A,B)$ is called \emph{$L^p$-admissible} 
if for some (hence, all) $\tau> 0$,
\[
   u\in L^p([0,\tau], U) \qrq \Phi_{\tau}u \in X.
\]    

\subsection{\texorpdfstring{{\boldmath$L^p$-$L^q$}}{L^p-L^q}-admissibility for linear systems}

If $p<\infty$ and $\Sigma(A,B)$ is $L^p$-admissible, then it is well-known \cite[Proposition~2.3]{Weiss1989b} that the mild solution given by~\eqref{eq:flow} is continuous with values in $X$. In turn,
\[
    \big(u \mapsto \Phi_{(\argument)}u\big)  \in  \calL\Big(L^p([0,t], U), L^q ([0,t], X)\Big).
\]
holds for all $q\in [1,\infty]$ and $t>0$. Without $L^p$-admissibility, we at least always have
\[
    \big(u \mapsto \Phi_{(\argument)}u\big)  \in  \calL\Big(L^p([0,t], U), L^q ([0,t], X_{-1})\Big),
\]
which motivates the following definition:

\begin{definition}
    For $p,q\in [1,\infty]$, the system $\Sigma(A,B)$ in Setting~\ref{setting:system} is called  \emph{(finite-time) $L^p$-$L^q$-admissible} if for some (hence, all) $t> 0$,
    \[
       u\in L^p([0,t], U) \qrq \Phi_{(\argument)}u \in L^q ([0,t], X).
    \]    
    Further, $\Sigma(A,B)$ is called \emph{$L^p$-$\mathrm C$-admissible} if for some (hence, all) $t> 0$,
    \[
    u\in L^p([0,t], U) \qrq   \Phi_{(\argument)}u \in\mathrm C([0,t], X).
    \]
\end{definition}

\begin{remarks}
    \label{rem:admissibility-definition}
    (a) Let $\tau\ge 0$ be such that  $\Phi_{(\argument)}u \in L^q ([0,\tau], X)$ for all $u\in L^p([0,\tau], U)$. 
    Then for each $t<\tau$, it is easy to see that $\Phi_{(\argument)}u \in L^q ([0,t], X)$ for all $u\in L^p([0,t], U)$ -- by simply extending $u$ by $0$. By the composition property of the input maps, the same is true for $t=2\tau$, and so by induction for $t=2^n\tau$ as well. 
    Consequently, $\Phi_{(\argument)}u \in L^q ([0,t], X)$ for all $t>0$ and all $u\in L^p([0,t], U)$.
    
    (b) Note that $\Phi_{(\argument)}u \in L^q([0,t],X)$ only means that $\Phi_{(\argument)}u$ takes values in $X$ almost everywhere. In particular,
    for any $u\in L^p([0,t],X)$ there exists some $t(u)$ such that
    $\Phi_{t(u)}u\in X$ (point evaluation of $\Phi_{(\argument)}u$ is well defined
    since this function is defined everywhere in $X_{-1}$), but this $t(u)$ need
    not be uniform in $u$.
\end{remarks}

The following criterion of $L^p$-$L^q$-admissibility follows by a direct application of the closed graph theorem; cf. \cite[Proposition 4.2]{Weiss1989b}.
\begin{proposition}
    \label{prop:admissibility-criterion}
    Let $p,q\in [1,\infty]$.
    The system $\Sigma(A,B)$ in Setting~\ref{setting:system} is $L^p$-$L^q$-admissible if and only if for each $t>0$ there exists $c(t)>0$ such that
    \begin{align}
    \label{eq:admissibility-uniform}
        \norm{\Phi_{(\argument)}u}_{L^q([0,t], X)} \le c(t) \norm{u}_{L^p([0,t], U)}    
    \end{align}
    for all $u\in L^p([0,t], U)$.    
\end{proposition}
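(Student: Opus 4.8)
The plan is to derive the quantitative bound \eqref{eq:admissibility-uniform} from the qualitative admissibility hypothesis by an application of the closed graph theorem, exactly as in \cite[Proposition~4.2]{Weiss1989b}. The implication ``$\Leftarrow$'' is immediate: if \eqref{eq:admissibility-uniform} holds for every $t>0$, then in particular $\Phi_{(\argument)}u\in L^q([0,t],X)$ whenever $u\in L^p([0,t],U)$, which is precisely the definition of $L^p$-$L^q$-admissibility.

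For ``$\Rightarrow$'', fix $t>0$ and consider the linear map
\[
    \Psi\colon L^p([0,t],U)\longrightarrow L^q([0,t],X),\qquad u\longmapsto \Phi_{(\argument)}u .
\]
By the admissibility hypothesis (together with Remarks~\ref{rem:admissibility-definition}, which lets us pass freely between time horizons), $\Psi$ is everywhere defined on $L^p([0,t],U)$, and both $L^p([0,t],U)$ and $L^q([0,t],X)$ are Banach spaces for all $p,q\in[1,\infty]$. Hence it suffices to check that the graph of $\Psi$ is closed; the closed graph theorem then produces a constant $c(t)>0$ with $\norm{\Psi u}_{L^q([0,t],X)}\le c(t)\,\norm{u}_{L^p([0,t],U)}$ for all $u$, which is exactly \eqref{eq:admissibility-uniform}.

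To verify closedness of the graph, let $u_n\to u$ in $L^p([0,t],U)$ and suppose $\Psi u_n\to v$ in $L^q([0,t],X)$. As recalled in the discussion preceding the definition, the map $u\mapsto\Phi_{(\argument)}u$ is bounded from $L^p([0,t],U)$ into $L^q([0,t],X_{-1})$, so $\Phi_{(\argument)}u_n\to\Phi_{(\argument)}u$ in $L^q([0,t],X_{-1})$. On the other hand, since $\Res(\lambda,A)$ is injective, the canonical map $X\hookrightarrow X_{-1}$ is a continuous injection, whence $L^q([0,t],X)\hookrightarrow L^q([0,t],X_{-1})$ continuously, and therefore $\Psi u_n\to v$ also in $L^q([0,t],X_{-1})$. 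Uniqueness of limits in the Hausdorff space $L^q([0,t],X_{-1})$ forces $v=\Phi_{(\argument)}u=\Psi u$ (both sides take values in $X$ almost everywhere), so the graph of $\Psi$ is closed and the proof is complete.

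There is essentially no real obstacle here beyond bookkeeping: the one point requiring a moment's care is the identification of the two limits of $(\Psi u_n)_n$ — one computed in $L^q([0,t],X)$ and one in $L^q([0,t],X_{-1})$ — which is resolved by mapping everything into the common target $L^q([0,t],X_{-1})$ via the continuous embedding $X\hookrightarrow X_{-1}$ and invoking the already-established boundedness of $u\mapsto\Phi_{(\argument)}u$ into that larger space. The concatenation property of the input maps is used only to make the condition independent of $t$; it plays no further role in the argument.
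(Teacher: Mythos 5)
Your proof is correct and follows exactly the route the paper indicates: the paper disposes of this proposition by citing the closed graph theorem (cf.\ \cite[Proposition~4.2]{Weiss1989b}), and your argument is precisely the fleshed-out version of that, with the graph-closedness verified via the continuous embedding $L^q([0,t],X)\hookrightarrow L^q([0,t],X_{-1})$ and the a priori boundedness of $u\mapsto\Phi_{(\argument)}u$ into the latter space. Nothing is missing.
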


The following strengthening of $L^p$-$L^q$-admissibility is of natural interest owing to Proposition~\ref{prop:admissibility-criterion} and the analogous notion of infinite-time $L^p$-admissibility:

\begin{definition}
    Let $p,q\in [1,\infty]$.
    In Setting~\ref{setting:system}, if $\Sigma(A,B)$ is $L^p$-$L^q$-admissible, and 
    if the constants $c(t)$ in \eqref{eq:admissibility-uniform} can be chosen independently of $t$, i.e., if
    \[
        c_{\infty} := \sup_{t\ge 0} c(t) <\infty,
    \]
    then the system $\Sigma(A,B)$ is called \emph{infinite-time $L^p$-$L^q$-admissible}.
\end{definition}

We establish in the following that for systems with bounded control operator $B$, the exponential stability of the semigroup is sufficient for infinite-time $L^p$-$L^q$-admissibility, provided that $p\le q$. However, this implication does not hold when $p>q$, as demonstrated by a counterexample presented in Example~\ref{ex:Scalar-system-Counterex}.

\begin{proposition}
    \label{prop:admissibility-bounded}
    In Setting~\ref{setting:system}, if $(T(t))_{t\ge 0}$ is exponentially stable and $B\in \calL(U,X)$, then $\Sigma(A,B)$ is infinite-time $L^p$-$L^q$-admissible for all $1\le p\le q\le\infty$.
\end{proposition}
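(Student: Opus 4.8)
The plan is to reduce the statement to a scalar convolution estimate and then invoke Young's convolution inequality. Since here $B\in\calL(U,X)$, the input map is the genuine $X$-valued Bochner integral $\Phi_{\tau}u=\int_0^{\tau}T(\tau-s)Bu(s)\dx s$, and $\tau\mapsto\Phi_{\tau}u$ is continuous with values in $X$ for every $u\in L^p([0,t],U)$ and every $p\in[1,\infty]$; in particular the $L^q([0,t],X)$-norm of $\Phi_{(\argument)}u$ makes sense, and by Proposition~\ref{prop:admissibility-criterion} it suffices to establish a bound of the form \eqref{eq:admissibility-uniform} with a constant that does not depend on $t$. Fix $M,\omega>0$ with $\norm{T(s)}\le Me^{-\omega s}$ for all $s\ge 0$.

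First I would record a pointwise bound on $\norm{\Phi_{\tau}u}_X$. Given $t>0$ and $u\in L^p([0,t],U)$, let $f$ be the function $s\mapsto\norm{u(s)}_U$ on $[0,t]$, extended by zero to $\bbR_+$, and set $g(s):=e^{-\omega s}$ for $s\ge 0$. Using $\norm{T(\tau-s)}\le Me^{-\omega(\tau-s)}$ and $g(\tau-s)=0$ for $s>\tau$, we get, for every $\tau\in[0,t]$,
\[
    \norm{\Phi_{\tau}u}_X \;\le\; M\norm{B}\int_0^{\tau}e^{-\omega(\tau-s)}\norm{u(s)}_U\dx s \;=\; M\norm{B}\,(g*f)(\tau),
\]
so that $\norm{\Phi_{(\argument)}u}_{L^q([0,t],X)}\le M\norm{B}\,\norm{g*f}_{L^q(\bbR_+)}$, and the problem is now purely scalar.

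Next I would choose the Young exponent. Define $a\in[1,\infty]$ by $\tfrac1a:=1-\bigl(\tfrac1p-\tfrac1q\bigr)$. The hypothesis $p\le q$ gives $\tfrac1p-\tfrac1q\ge 0$, hence $\tfrac1a\le 1$; and $\tfrac1p-\tfrac1q\le\tfrac1p\le 1$ gives $\tfrac1a\ge 0$; so $a$ is well defined and the exponents satisfy $\tfrac1a+\tfrac1p=1+\tfrac1q$ with $a,p,q\in[1,\infty]$. Young's convolution inequality then yields
\[
    \norm{g*f}_{L^q(\bbR_+)} \;\le\; \norm{g}_{L^a(\bbR_+)}\,\norm{f}_{L^p(\bbR_+)} \;=\; \norm{g}_{L^a(\bbR_+)}\,\norm{u}_{L^p([0,t],U)},
\]
and $\norm{g}_{L^a(\bbR_+)}=(a\omega)^{-1/a}$ if $a<\infty$ while $\norm{g}_{L^\infty(\bbR_+)}=1$; in every case this is a finite constant depending only on $a$ and $\omega$, not on $t$. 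Combining the two displays gives \eqref{eq:admissibility-uniform} with $c(t)\equiv c_\infty:=M\norm{B}\,\norm{g}_{L^a(\bbR_+)}$ for all $t>0$; by Proposition~\ref{prop:admissibility-criterion} this means $\Sigma(A,B)$ is $L^p$-$L^q$-admissible, and since $c_\infty$ is independent of $t$ it is infinite-time $L^p$-$L^q$-admissible.

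I do not expect a genuine obstacle; the only points requiring care are the bookkeeping of exponents — it is precisely the assumption $p\le q$ that keeps the auxiliary exponent $a$ within the range $[1,\infty]$ where Young's inequality applies, which is consistent with the fact, shown in Example~\ref{ex:Scalar-system-Counterex}, that the conclusion fails for $p>q$ — and checking that the endpoint cases fit the same formula (e.g.\ $q=\infty$ forces $a=p'$ and the estimate becomes Hölder's inequality, while $p=q=\infty$ forces $a=1$ and uses $g\in L^1(\bbR_+)$).
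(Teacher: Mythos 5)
Your argument is correct and is essentially the paper's proof: both reduce the estimate to a scalar convolution of $e^{-\omega(\cdot)}$ with $\norm{Bu(\cdot)}$ (or $\norm{B}\norm{u(\cdot)}$) and apply Young's inequality with the auxiliary exponent determined by $1/a=1-(1/p-1/q)$, which lies in $[1,\infty]$ precisely because $p\le q$. The only differences are cosmetic (swapped names for the two convolved functions and a slightly more explicit treatment of the endpoint exponents).
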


\begin{proof}
     By exponential stability, there exist $M,\omega>0$ such that $\norm{T(t)}\le Me^{-\omega t}$ for all $t\ge 0$. Let $p,q\in [1,\infty]$ with $p\le q$. So we can choose $r\in [1,\infty]$ such that $r^{-1}=1-(p^{-1}-q^{-1})$. Fix $t\ge 0$ and let $u\in L^p([0,t], U)$. Let $f= e^{-\omega\argument}\one_{[0,t]}$ and $g$ be the extension of $\norm{Bu(\argument)}$ by $0$ outside $[0,t]$. Then, for each $\tau \le t$, we have
    \[
        \norm{\Phi_{\tau}u} \le M \int_0^{\tau} e^{-\omega(\tau-s)}\norm{Bu(s)} ds \le M\int_{-\infty}^{+\infty}f(\tau-s)g(s)ds=: M(f\star g)(\tau).
    \]

    Young's convolution inequality thereby implies that
    \begin{align*}
        \norm{\Phi_{(\argument)}u}_{L^q([0,t],X)} &\le M\norm{f\star g}_{L^q(\R)} \leq M \norm{f}_{L^r(\R)} \norm{g}_{L^p(\R)}\\
        &\le 
        \begin{cases}
			M(\omega r)^{-1/r}\norm{B}\norm{u}_{L^p([0,t],U)}, & \text{if $r<\infty$},\\
            M\norm{B}\norm{u}_{L^p([0,t],U)}, & \text{if $r=\infty$}, 
		 \end{cases}
    \end{align*}
    and so $\Sigma(A,B)$ is infinite-time $L^p$-$L^q$-admissible.
\end{proof}

\begin{example}
    \label{ex:Scalar-system-Counterex}
    Consider a scalar system
    \begin{align}
        \label{eq:ToySys-LpLq}
        \dot{x} = -x + u.
    \end{align}
    In this case, the semigroup is simply $(e^{-t})_{t\ge 0}$ and the control operator is the identity operator. In particular, the associated input operator is given by
    \[
        \Phi_t u = \int_0^t e^{-(t-s)}u(s)\ ds,
    \]
    from which we get $L^p$-$\mathrm C$-admissibility, and hence $L^p$-$L^q$-admissibility for all $p,q\in[1,\infty]$. By Proposition~\ref{prop:admissibility-bounded}, this admissibility is even infinite-time if $p\le q$. 

    Next, fix $p\in (1,\infty)$ and choose $u \in L^p(\bbR_+,\bbR) \setminus L^1(\bbR_+,\bbR)$
    such that $u(t) \geq 0$ for a.e. $t$. Then the solution $\phi(t,0,u)\geq 0$ for all $t\geq 0$.
    Furthermore, by exponential stability, we can apply \cite[Theorem~2.1.21]{Mironchenko2023b} to obtain that \eqref{eq:ToySys-LpLq} is $L^p$-ISS, i.e., there are $M,G>0$ such that the mild solution~\eqref{eq:flow} satisfies
    \[
        \modulus{\phi(t,x,u)}\leq Me^{-t}\norm{x} + G \norm{u}_{L^p([0,t],\bbR)} \qquad (t>0).
    \]

    Since $u\notin L^1(\bbR_+,\bbR)$, and $u$ is nonnegative, it holds that $\int_0^tu(s)\ ds\to\infty$ as $t\to\infty$. 
    On the other hand, because $u \in L^p(\bbR_+,\bbR)$ and $p\ne 1$, we can apply
    \cite[Propositions~4.3 and~4.7]{Mironchenko2023a} with $\xi(r):=r^p$, to obtain that $x(t) = \phi(t,0,u) \to 0$ as $t\to\infty$.
    Therefore, integrating~\eqref{eq:ToySys-LpLq} in the case $x(0)=0$, we have that  
    \begin{align}
        \label{eq:tmp-estimate}
        \int_0^tx(s)ds = -x(t) + \int_0^tu(s)ds \stackrel{t \to \infty}{\longrightarrow}\infty.
    \end{align}
    This means that there is no $c>0$ such that 
    \[
        \norm{\phi(\cdot,0,u)}_{L^1([0,t],\bbR)} \leq c \norm{u}_{L^p([0,t],\bbR)},\quad t\geq 0,
    \]
    and consequently,~\eqref{eq:ToySys-LpLq} is not infinite-time $L^p$-$L^1$ admissible.
\end{example}

\subsection{\texorpdfstring{{\boldmath$L^p$-$L^p$}}{L^p-L^p}-admissibility for analytic semigroups}
\label{sec:LpLp-admissibility for analytic semigroups}

    In the case $p=q$, the notion of $L^p$-$L^q$-admissibility generalises the classical notion of \emph{maximal $L^p$-regularity}. Consider the Cauchy problem
    \begin{subequations}
    \label{eq:max-reg-sys}
    \begin{align}
        \dot{x}(t) & = Ax(t) + f(t), \quad t\in \overline I,\\
        x(0)       & = 0;
    \end{align}        
    \end{subequations}
    where $A$ is a linear operator on a Banach space $X$ and $I=(0,\tau)$ or $I=\bbR_+$. 

\begin{definition}
    Let $p\in [1,\infty]$.
    We say that \emph{$A$ has maximal $L^p$-regularity on $I$} if there exists $C\equiv C(I)\ge 0$ such that for any $f\in L^p(I,X)$, the  Cauchy problem \eqref{eq:max-reg-sys} has a unique strong solution $x$ with $x, Ax \in L^p(I, X)$ and
    \[
        \norm{\dot x}_{p} + \norm{Ax}_p \le C\norm{f}_p.
    \]
\end{definition}

\begin{remark}
    \label{rem:maximal-regularity}
    Maximal regularity ensures that the operator $A$ generates an analytic $C_0$-semigroup $(T(t))_{t\ge 0}$ on $X$; see \cite[Theorem~17.2.15]{HytoenenNeervenVeraarWeis2023}. Moreover, by \cite[Theorem~17.2.19]{HytoenenNeervenVeraarWeis2023} if $A$ generates an analytic $C_0$-semigroup $(T(t))_{t\ge 0}$ on $X$, then 
    $A$ has maximal $L^p$-regularity on $I$ if and only if there exists $C>0$ such that
    \begin{align}
    \label{eq:max-reg-reformulation}
    \hspace{-3mm}\norm{\int_0^{\argument} \hspace{-2mm}T(\argument-s)A_{-1}f(s)\ ds}_{L^p(I, X)} \hspace{-3mm}= \norm{A\int_0^{\argument} \hspace{-2mm} T(\argument-s)f(s)\ ds}_{L^p(I, X)} \hspace{-3mm}\le C\norm{f}_{L^p(I, X)}    
    \end{align}
    for all $f$ in (a dense subspace of) $L^p(I, X)$. In other words, for generators of analytic semigroups, on bounded intervals, maximal $L^p$-regularity is equivalent to $L^p$-$L^p$-admissibility of $\Sigma(A,A_{-1})$ and if $I=\bbR_+$, then maximal $L^p$-regularity is equivalent to infinite-time $L^p$-$L^p$-admissibility of $\Sigma(A, A_{-1})$. 
\end{remark}

In view of Remark~\ref{rem:maximal-regularity}, we adapt some well-known results about maximal regularity to our setting  to obtain results about $L^p$-$L^p$-admissibility. To this end, we use the following observation:~suppose that $0\in \resSet(A)$, so that $A^{-1} \in \calL(X,D(A))$ and in turn, $A^{-1}_{-1} \in \calL(X_{-1},X)$. Since $B\in \calL(U,X_{-1})$, we infer $A_{-1}^{-1}B \in \calL(U,X)$. Thus for each $u :\bbR_+\to U$, we obtain that $f:=A_{-1}^{-1}Bu$ maps into $X$. Therefore for each $t$, we can write
    \begin{equation}
        \label{eq:input-with-f}
        \Phi_t u = \int_0^{t} T_{-1}(t-s)A_{-1}A_{-1}^{-1}Bu(s)\ ds
                = A_{-1}\int_0^{t} T(t-s)f(s)\ ds,
    \end{equation}
 where we have used the facts that the generator of the semigroup commutes with the semigroup operators and the extrapolated semigroup agrees with the original semigroup on $X$.

 \begin{proposition}
    \label{prop:maximal-regularity-implies-admissibility}
    The following  are equivalent for an analytic $C_0$-semigroup $(T(t))_{t\ge 0}$ generated by $A$ on a Banach space $X$ and $p\in [1,\infty]$.
    \begin{enumerate}[\upshape (i)]
        \item For each Banach space $U$ and each $B\in \calL(U, X_{-1})$, the system $\Sigma(A,B)$ is $L^p$-$L^p$-admissible.
        \item The system $\Sigma(A,A_{-1})$ is $L^p$-$L^p$-admissible.
        \item The operator $A$ satisfies maximal $L^p$-regularity on bounded intervals.
    \end{enumerate}
    Moreover, the analogous equivalence holds between infinite-time $L^p$-$L^p$-admissibility and maximal $L^p$-regularity on $\bbR_+$.
\end{proposition}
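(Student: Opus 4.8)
The plan is to get (i) $\Rightarrow$ (ii) by a trivial specialization, to read off (ii) $\Leftrightarrow$ (iii) (and their infinite-time versions) from Remark~\ref{rem:maximal-regularity}, and then to put all the work into (iii) $\Rightarrow$ (i). For (i) $\Rightarrow$ (ii), apply (i) with $U = X$ and $B = A_{-1}$, noting that $A_{-1}$ maps $X$ boundedly into $X_{-1}$ (recall $D(A_{-1}) = X$, so $A_{-1}$ is closed as a map $X \to X_{-1}$ and hence bounded), so that $\Sigma(A, A_{-1})$ is a system of the form in Setting~\ref{setting:system}. For (ii) $\Leftrightarrow$ (iii): the input operator of $\Sigma(A, A_{-1})$ acting on $u \in L^p([0,t], X)$ is exactly the operator $\int_0^{\argument} T(\argument - s) A_{-1} u(s)\,ds = A\int_0^{\argument} T(\argument - s) u(s)\,ds$ appearing on the left of \eqref{eq:max-reg-reformulation}, so $L^p$-$L^p$-admissibility of $\Sigma(A, A_{-1})$ on a bounded interval is estimate \eqref{eq:max-reg-reformulation} there, which by \cite[Theorem~17.2.19]{HytoenenNeervenVeraarWeis2023} is maximal $L^p$-regularity (and forces analyticity of the semigroup); the case $I = \bbR_+$ is analogous, and passing between \q{some $t$} and \q{all $t$} on the admissibility side is Remarks~\ref{rem:admissibility-definition}. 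This is precisely what Remark~\ref{rem:maximal-regularity} records, so I would just cite it.

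For (iii) $\Rightarrow$ (i) I would first reduce to the case $0 \in \resSet(A)$. Since $A$ generates a $C_0$-semigroup, $\lambda \in \resSet(A)$ for every sufficiently large $\lambda > 0$; fixing such a $\lambda$ and setting $\tilde A := A - \lambda$ gives $0 \in \resSet(\tilde A)$. Now: (a) maximal $L^p$-regularity on bounded intervals is stable under the bounded perturbation $-\lambda I$, so $\tilde A$ still satisfies (iii); (b) the extrapolation space and the space $\calL(U, X_{-1})$ are unchanged up to equivalent norms in passing from $A$ to $\tilde A$, so the same $B$ may be used; (c) the corresponding input operators satisfy $\tilde\Phi_t\big(e^{-\lambda\argument} u\big) = e^{-\lambda t}\,\Phi_t u$, which on any bounded time interval is a two-sided norm equivalence both on inputs and on trajectories. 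Combining (a)–(c), $\Sigma(A, B)$ is $L^p$-$L^p$-admissible iff $\Sigma(\tilde A, B)$ is, so I may assume $0 \in \resSet(A)$. Then $A_{-1}^{-1} B \in \calL(U, X)$, and for $u \in L^p([0,t], U)$ the function $f := A_{-1}^{-1} B u$ lies in $L^p([0,t], X)$; by \eqref{eq:input-with-f} we have $\Phi_{(\argument)} u = A\int_0^{\argument} T(\argument - s) f(s)\,ds$, which by \eqref{eq:max-reg-reformulation} lies in $L^p([0,t], X)$ with $\norm{\Phi_{(\argument)} u}_{L^p([0,t], X)} \le C(t)\,\norm{A_{-1}^{-1} B}\,\norm{u}_{L^p([0,t], U)}$; by Proposition~\ref{prop:admissibility-criterion} this is exactly $L^p$-$L^p$-admissibility of $\Sigma(A, B)$. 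One routine gap must be filled: \eqref{eq:max-reg-reformulation} is a priori stated only on a dense subspace of $L^p$ (for $p < \infty$), so to apply it to $f = A_{-1}^{-1} B u$ I would approximate $f$ in $L^p([0,t], X)$, use that $g \mapsto T\star g$ is bounded on $L^p([0,t], X)$ together with closedness of $A$ to pass to the limit, thereby upgrading both the estimate and the conclusion \q{$\Phi_{(\argument)} u$ takes values in $X$ a.e.} to all of $L^p$; for $p = \infty$ one invokes \eqref{eq:max-reg-reformulation} directly for all $f \in L^\infty$.

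The infinite-time statement follows the same scheme: (i) $\Rightarrow$ (ii) is again specialization and (ii) $\Leftrightarrow$ (iii) is Remark~\ref{rem:maximal-regularity} with $I = \bbR_+$. For (iii) $\Rightarrow$ (i) the shift trick is no longer harmless on $\bbR_+$, so instead I would argue directly: pick any $\mu \in \resSet(A)$, put $g := (\mu - A_{-1})^{-1} B u \in L^p(\bbR_+, X)$ for $u \in L^p(\bbR_+, U)$, write $B u = \mu g - A_{-1} g$, and hence $\Phi_{(\argument)} u = \mu\,(T \star g) - A\,(T \star g)$. The second term is controlled in $L^p(\bbR_+, X)$ by \eqref{eq:max-reg-reformulation} with $I = \bbR_+$, and the first by the bound $\norm{T \star g}_{L^p(\bbR_+, X)} \le C\,\norm{g}_{L^p(\bbR_+, X)}$, which is part of maximal $L^p$-regularity on $\bbR_+$ (the solution $x = T \star g$ is required to lie in $L^p(\bbR_+, X)$, and the solution map is bounded by the closed graph theorem). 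This gives $\norm{\Phi_{(\argument)} u}_{L^p(\bbR_+, X)} \le C'\,\norm{u}_{L^p(\bbR_+, U)}$; restricting to $[0,t]$ and extending $u$ by zero shows $C'$ is independent of $t$, so $\Sigma(A, B)$ is infinite-time $L^p$-$L^p$-admissible. (In fact maximal regularity on $\bbR_+$ forces $0 \in \resSet(A)$, in which case this collapses to the one-line argument of the previous paragraph with $[0,t]$ replaced by $\bbR_+$.)

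The only places where something genuinely has to be verified rather than quoted are the reduction to $0 \in \resSet(A)$ via the shift $A \mapsto A - \lambda$ on bounded intervals — checking that $L^p$-$L^p$-admissibility is transported by the weight $e^{-\lambda t}$ and that maximal regularity on bounded intervals is perturbation-stable — and the density/closedness step needed to use \eqref{eq:max-reg-reformulation} beyond its stated domain, which is where the cases $p \in \{1, \infty\}$ demand the most care. Everything else is a direct translation through \eqref{eq:input-with-f} and Proposition~\ref{prop:admissibility-criterion}, so I would expect the final write-up to be short once these two points are settled.
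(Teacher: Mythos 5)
Your proposal is correct and follows essentially the same route as the paper: (i) $\Rightarrow$ (ii) by specialization, (ii) $\Leftrightarrow$ (iii) via Remark~\ref{rem:maximal-regularity}, and (iii) $\Rightarrow$ (i) by reducing to $0\in\resSet(A)$, setting $f:=A_{-1}^{-1}Bu$ and feeding \eqref{eq:input-with-f} into \eqref{eq:max-reg-reformulation}. The only differences are that you spell out details the paper leaves implicit -- the weighted transport of admissibility under the shift $A\mapsto A-\lambda$, the density/closedness step needed to apply \eqref{eq:max-reg-reformulation} to all of $L^p$, and the observation that the shift is not innocuous on $\bbR_+$ (which you circumvent via the decomposition $Bu=\mu g-A_{-1}g$, or by noting that maximal regularity on $\bbR_+$ already forces $0\in\resSet(A)$) -- all of which are sound and, if anything, tighten the paper's argument.
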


\begin{proof}
    Due to Remark~\ref{rem:maximal-regularity}, the assertions (ii) and (iii) are equivalent and implication (i) $\Rightarrow$ (ii) is obvious.

    ``(iii) $\Rightarrow$ (i)'': Suppose that $A$ satisfies maximal $L^p$-regularity on $I$ and let $u \in L^p(I,U)$; where $I=(0,\tau)$ or $I=\bbR_+$. 
    Translating the generator, we may assume without loss of generality that $0 \in \resSet(A)$, so that $f:=A_{-1}^{-1}Bu\in L^p(I,X)$. By maximal regularity,~\eqref{eq:input-with-f}, and \eqref{eq:max-reg-reformulation}, there exists $C_{I}>0$ such that
    \[
        \norm{\Phi_{(\argument)}u}_{L^q(I, X)}\le C_{I}\norm{f}_{L^p(I, X)} \le C_{I}\norm{A_{-1}^{-1}B}_{\calL(U, X)} \norm{u}_{L^p(I, U)}
    \]
    and so $\Sigma(A,B)$ is (in)finite-time $L^p$-$L^p$-admissible (if $I=\bbR_+$).
\end{proof}

Recall that an analytic semigroup is called \emph{bounded analytic} if its extension to a certain sector is bounded, see \cite[Section~II.4.5]{EngelNagel2000} for a precise definition. 
As mentioned before, maximal regularity of $A$ on (un)bounded intervals implies that $A$ generates (bounded) analytic semigroup. 
A remarkable result due to de Simon shows that the converse is true in Hilbert spaces; see \cite[Theorem~4.1]{Dore1993} or \cite[Corollary~17.3.8]{HytoenenNeervenVeraarWeis2023}. Combining \cite[Corollary~17.3.8]{HytoenenNeervenVeraarWeis2023} with Proposition~\ref{prop:maximal-regularity-implies-admissibility}, we immediately obtain the following:

\begin{corollary}
    \label{cor:hilbert-analytic-admissible}
    Let $(T(t))_{t \ge 0}$ be a $C_0$-semigroup on a Hilbert space $X$.
    \begin{enumerate}[\upshape (a)]
        \item If the semigroup $(T(t))_{t \ge 0}$ is bounded analytic, then $\Sigma(A,B)$ is infinite-time $L^p$-$L^p$-admissible for all $p\in (1,\infty)$ and all $B\in \calL(U, X_{-1})$.

        \item If the semigroup $(T(t))_{t \ge 0}$ is analytic, then $\Sigma(A,B)$ is finite-time $L^p$-$L^p$-admissible for all $p\in (1,\infty)$ and all $B\in \calL(U, X_{-1})$.
    \end{enumerate}
\end{corollary}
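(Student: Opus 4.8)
The plan is to deduce Corollary~\ref{cor:hilbert-analytic-admissible} directly by feeding de Simon's theorem into Proposition~\ref{prop:maximal-regularity-implies-admissibility}, so the only real work is bookkeeping about which version of maximal regularity (on a bounded interval versus on $\bbR_+$) one obtains under which hypothesis, and how to remove the hypothesis $0\in\resSet(A)$ that Proposition~\ref{prop:maximal-regularity-implies-admissibility} implicitly used via the translation argument.

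For part (b), suppose $(T(t))_{t\ge0}$ is analytic with generator $A$. Analyticity is a local property and is preserved under translating the generator, so \cite[Corollary~17.3.8]{HytoenenNeervenVeraarWeis2023} (de Simon's theorem in the Hilbert-space setting) applies after replacing $A$ by $A-\mu$ for $\mu$ large enough that $A-\mu$ generates a bounded analytic semigroup; this gives maximal $L^p$-regularity on $\bbR_+$, hence a fortiori on every bounded interval, for all $p\in(1,\infty)$. The key point to justify is that $L^p$-$L^p$-admissibility of $\Sigma(A,B)$ on bounded intervals is insensitive to translation of the generator: if $\tilde A=A-\mu$ then $\tilde T(t)=e^{-\mu t}T(t)$, the extrapolation spaces $X_{-1}$ coincide with equivalent norms, and $\tilde\Phi_t u = \int_0^t e^{-\mu(t-s)}T_{-1}(t-s)Bu(s)\,ds$, so $\norm{\Phi_{(\argument)}u-\tilde\Phi_{(\argument)}u}$ on $[0,t]$ is controlled using $|1-e^{-\mu(t-s)}|\le C_{\mu,t}$ together with the admissibility estimate already available for $\tilde A$; alternatively one simply notes that $\tilde A$ satisfies maximal $L^p$-regularity on bounded intervals iff $A$ does, since on a bounded interval the lower-order term $\mu x$ is absorbed by the estimate. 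Then Proposition~\ref{prop:maximal-regularity-implies-admissibility}, implication (iii) $\Rightarrow$ (i), yields that $\Sigma(A,B)$ is finite-time $L^p$-$L^p$-admissible for every Banach space $U$ and every $B\in\calL(U,X_{-1})$.

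For part (a), assume in addition that $(T(t))_{t\ge0}$ is bounded analytic. Then de Simon's theorem applies without any translation and gives maximal $L^p$-regularity on $I=\bbR_+$ for all $p\in(1,\infty)$; by the last sentence of Proposition~\ref{prop:maximal-regularity-implies-admissibility} (the infinite-time version of (iii) $\Rightarrow$ (i)) this is equivalent to infinite-time $L^p$-$L^p$-admissibility of $\Sigma(A,A_{-1})$, and the proof of that proposition shows the same constant-uniform estimate transfers to $\Sigma(A,B)$ for arbitrary $B\in\calL(U,X_{-1})$ via the factorisation $f=A_{-1}^{-1}Bu$ in~\eqref{eq:input-with-f}. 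Here one should double-check that bounded analyticity gives $0\in\resSet(A)$ is \emph{not} automatic, but the proof of Proposition~\ref{prop:maximal-regularity-implies-admissibility} already handles this by translating; the point is that for a bounded analytic semigroup one may translate by a small $\varepsilon>0$ while staying bounded analytic, or invoke that de Simon's theorem is stated for generators of bounded analytic semigroups on Hilbert spaces directly and the reformulation~\eqref{eq:max-reg-reformulation} only requires $A_{-1}^{-1}B$ to make sense after such a harmless shift.

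The main obstacle is thus not any hard analysis but a careful treatment of the translation/normalisation step: ensuring that (i) analyticity and bounded analyticity behave correctly under $A\mapsto A-\mu$, (ii) the invertibility assumption $0\in\resSet(A)$ needed to form $A_{-1}^{-1}B$ can always be arranged without destroying the relevant property, and (iii) the resulting admissibility estimates genuinely transfer back from $A-\mu$ to $A$ on bounded intervals (for (b)) with no loss, and on $\bbR_+$ (for (a)) with a uniform constant. Once these routine reductions are in place, the corollary is immediate from \cite[Corollary~17.3.8]{HytoenenNeervenVeraarWeis2023} and Proposition~\ref{prop:maximal-regularity-implies-admissibility}.
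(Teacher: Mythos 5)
Your proposal is correct and follows essentially the same route as the paper, which proves the corollary simply by combining de Simon's theorem \cite[Corollary~17.3.8]{HytoenenNeervenVeraarWeis2023} with Proposition~\ref{prop:maximal-regularity-implies-admissibility}; the translation and invertibility bookkeeping you spell out is already absorbed into the proof of that proposition (and Remark~\ref{rem:admissibility-translation}). No gaps.
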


\begin{remark}
    Let $(T(t))_{t \ge 0}$ be an exponentially stable $C_0$-semigroup on a Hilbert space $X$. If $(T(t))_{t \ge 0}$ is bounded analytic and similar to a contraction semigroup, then $\Sigma(A,B)$ is infinite-time $L^\infty$-$\mathrm C$-admissible for all $B\in \calL(U, X_{-1})$ with $\dim U <\infty$. Indeed, in this case, we know from \cite[Theorem~1]{JacobSchwenningerZwart2019} that $\Sigma(A,B)$ is finite-time $L^\infty$-admissible, and all solutions corresponding to inputs in $L^\infty(\R_+,U)$ - are continuous. As $T$ is exponentially stable, this means that $\Sigma(A,B)$ is also infinite-time $L^\infty$-admissible, and thus $\Sigma(A,B)$ is  $L^\infty$-ISS, which implies that all solutions corresponding to $L^\infty(\R_+,U)$ are continuous and globally bounded, thus 
    $\Sigma(A,B)$ is infinite-time $L^\infty$-$\mathrm C$-admissible.
\end{remark}

For exponentially stable semigroups, $L^p$-admissibility is equivalent to infinite-time $L^p$-admissibility (see \cite[Lemma~2.9]{JacobNabiullinPartingtonSchwenninger2018} or \cite[Lemma~2.1.20]{Mironchenko2023b}). For the case of $L^p$-$L^q$-admissibility, the situation is more complex; see Example~\ref{ex:Scalar-system-Counterex}.
However, if the semigroup is analytic, then we have the following, cf. \cite[Theorem~2.4]{Dore1993}:

\begin{theorem}
    \label{thm:infinite-admissibility-sufficient}
    In Setting~\ref{setting:system}, let $(T(t))_{t\ge 0}$ be exponentially stable and analytic, and let $p\in [1,\infty]$. Then $\Sigma(A,B)$ is finite-time $L^p$-$L^p$-admissible if and only if $\Sigma(A,B)$ is infinite-time $L^p$-$L^p$-admissible.
\end{theorem}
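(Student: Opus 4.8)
The implication ``infinite-time $\Rightarrow$ finite-time'' is immediate from the definitions, so the substance is the converse; assume $\Sigma(A,B)$ is finite-time $L^p$-$L^p$-admissible. By Proposition~\ref{prop:admissibility-criterion} together with translation invariance of the input maps, for each $\ell>0$ there is $c(\ell)>0$ with $\norm{\Phi_{(\argument)}u}_{L^p([a,a+\ell],X)}\le c(\ell)\norm{u}_{L^p([a,a+\ell],U)}$ for every $a\ge0$; only $\ell\in\{1,2\}$ will be used. By Remarks~\ref{rem:admissibility-definition} (extending inputs by $0$) it suffices to produce a constant $C>0$ \emph{independent of} $N\in\bbN$ such that $\norm{\Phi_{(\argument)}u}_{L^p([0,N],X)}\le C\norm{u}_{L^p([0,N],U)}$ for all $u$. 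Since exponential stability forces $0\in\resSet(A)$, I would pass to the representation~\eqref{eq:input-with-f}: with $f:=A_{-1}^{-1}Bu\in L^p([0,N],X)$ (so $\norm{f}_p\le\norm{A_{-1}^{-1}B}_{\calL(U,X)}\norm{u}_p$, extended by $0$) one has $\Phi_tu=A_{-1}v(t)$ where $v(t):=\int_0^tT(t-s)f(s)\,ds$ defines a \emph{continuous} $X$-valued function. This continuity is exactly what lets me sidestep the pointwise-evaluation subtlety of Remark~\ref{rem:admissibility-definition}(b): the values $v(m)\in X$ are genuinely defined.

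The core is a window-by-window estimate of $\norm{A_{-1}v}_{L^p([m,m+1),X)}$. On $[0,1)$ finite-time admissibility gives directly $\norm{\Phi_{(\argument)}u}_{L^p([0,1),X)}\le c(1)\norm{u}_{L^p([0,1),U)}$. For $m\ge1$ and $t\in[m,m+1)$ I would split the Duhamel integral at time $m-1$,
\[
v(t)=T\big(t-(m-1)\big)v(m-1)+\int_{m-1}^{t}T(t-s)f(s)\,ds,
\]
and apply $A_{-1}$. After applying $A_{-1}$, the second summand is the input operator of $\Sigma(A,B)$ applied to a time-shift of $u\one_{[m-1,m+1)}$ on an interval of length $2$, hence contributes at most $c(2)\norm{u\one_{[m-1,m+1)}}_{L^p}$ in $L^p([m,m+1),X)$. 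For the first summand, $t-(m-1)\in[1,2]$, and here analyticity enters: $\tau\mapsto AT(\tau)$ is bounded on $[1,2]$, so this term contributes at most $C_a\norm{v(m-1)}_X$ with $C_a:=\norm{AT(\argument)}_{L^p([1,2],\calL(X))}<\infty$. Splitting at $m-1$ rather than at $m$ is deliberate: it keeps the elapsed time $\ge1$, so the short-time singularity $\norm{AT(\tau)}\sim\tau^{-1}$ of the analytic semigroup never appears outside the (finite-time) constant $c(2)$.

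It then remains to bound $\sum_{m\ge1}\norm{v(m-1)}_X^p$ uniformly in $N$. Exponential stability yields $\norm{v(j)}_X\le M\int_0^{j}e^{-\omega(j-s)}\norm{f(s)}_X\,ds$; comparing this with the continuous convolution $g:=M\big(e^{-\omega\argument}\one_{\bbR_+}\big)\star\big(\norm{f(\argument)}_X\one_{\bbR_+}\big)$ one gets $g(t)\ge e^{-\omega}\norm{v(j)}_X$ for all $t\in[j,j+1]$, whence $\sum_{j\ge0}\norm{v(j)}_X^p\le e^{\omega p}\norm{g}_{L^p(\bbR_+)}^p$, and Young's convolution inequality bounds the right-hand side by a constant times $\norm{f}_{L^p(\bbR_+)}^p$, hence by a constant times $\norm{u}_{L^p([0,N],U)}^p$. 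Summing the window estimates over $m=0,\dots,N-1$ and using the almost-disjointness bound $\sum_m\norm{u\one_{[m-1,m+1)}}_{L^p}^p\le2\norm{u}_{L^p([0,N],U)}^p$ delivers the $N$-independent constant $C$, and therefore infinite-time $L^p$-$L^p$-admissibility. I expect the main obstacle to be this final step: converting the pointwise values $\norm{v(j)}_X$ into a genuine $L^p(\bbR_+)$-bound and keeping every constant uniform in the horizon $N$, while cleanly splicing the two regimes — the singular short-time behaviour absorbed into $c(2)$ and the long-time behaviour that needs analyticity (to make $AT(\tau)$ bounded for $\tau\ge1$) together with exponential stability (to make the resulting series converge).
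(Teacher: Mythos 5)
Your proof is correct, and while it rests on the same three ingredients as the paper's argument --- finite-time admissibility on bounded windows for the recent past, analyticity to control $AT(\tau)$ away from $\tau=0$, and exponential stability to make the far past summable --- the decomposition is genuinely different. The paper splits the Duhamel integral at the moving time $t-1$ and then, for $t\in[k,k+1]$, splits again at $s=k$; the resulting overlap piece $\int_{t-1}^{k}AT(t-s)f(s)\,ds$, in which $t-s$ can be arbitrarily small, is handled by showing that the singular kernel $(t,s)\mapsto\norm{AT(t+s)}\lesssim (t+s)^{-1}$ is integrable on $[0,1]^2$ and hence induces a bounded kernel operator on $L^p([0,1])$. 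You instead split at the fixed grid point $m-1$, so that the entire recent past becomes a genuine (time-shifted) input map on a window of length $2$ and is absorbed into the finite-time constant $c(2)$; no kernel-operator lemma is needed and the short-time singularity of $AT$ never appears. The price is that your far-past contribution is concentrated in the single vectors $v(m-1)\in X$, so you must control $\sum_{j}\norm{v(j)}_X^p$ uniformly in the horizon; your comparison of the discrete values $\norm{v(j)}_X$ with the continuous convolution $g$ followed by Young's inequality does this correctly. Two small points are worth tightening in a write-up: the inequality $\norm{\Phi_{(\argument)}u}_{L^p([a,a+\ell],X)}\le c(\ell)\norm{u}_{L^p([a,a+\ell],U)}$ holds as stated only for inputs supported in $[a,a+\ell]$ (which is how you in fact use it), and one should record explicitly that $A_{-1}T(\tau)v(m-1)=AT(\tau)v(m-1)$ because analyticity places $T(\tau)v(m-1)$ in $\dom{A}$ for $\tau\ge 1$. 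Neither affects the validity of the argument.
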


\begin{proof}
     Let $\Sigma(A,B)$ be $L^p$-$L^p$-admissible for some 
     $p\in [1,\infty)$ and pick any $u\in L^p(\bbR_+,U)$.
    Firstly, exponential stability and analyticity of the semigroup imply that there exist $M,\omega>0$ such that
    \[
        \norm{AT(t)} = \norm{AT(1)T(t-1)} \le Me^{-\omega(t-1)},\qquad t\ge 1;
    \]
    here we have used that $\norm{AT(\argument)}\le C_1(\argument)^{-1}$ for some $C_1>0$, which is true by \cite[Theorem~2.6.13]{Pazy1983} because the semigroup is analytic and $0$ is in the resolvent set of $A$ (owing to exponential stability of the semigroup). Taking $f:=A_{-1}^{-1}Bu\in L^p(\bbR_+,X)$, we therefore obtain, that
    \[
        t\mapsto \int_0^{t-1} T(t-s)Bu(s)\ ds = \int_0^{t-1} AT(t-s)f(s)\ ds,\qquad t\geq 1
    \]
    lies in $L^p([1,\infty),X)$; for $p=\infty$ this is obvious, and for $p<\infty$ this follows by Young's convolution inequality \cite[Lemma~1.2.30]{HytoenenNeervenVeraarWeis2016}.
    Combining this observation with~\eqref{eq:input-with-f}, it only remains to show that there exists $C>0$ such that
    \begin{equation}
        \label{eq:tail}
        \norm{t\mapsto\int_{t-1}^t AT(t-s)f(s)\ ds}_{L^p([1,\infty), X) } \le C \norm{u}_{L^p(\bbR_+, U)};
    \end{equation}
    see Proposition~\ref{prop:admissibility-criterion}.
    We divide the proof of this into several steps.
    
    \emph{Step 1}: For each $k\in \bbN_0$, define $f_k=\one_{[k,k+1]}f$ and $u_k=\one_{[k,k+1]}u$. 
    For each $t\in [k,k+1]$, we write 
    \begin{align*}
        \int_{t-1}^t AT(t-s)f(s)\ ds & = \int_{t-1}^k AT(t-s)f(s)\ ds + \int_{k}^t AT(t-s)f(s)\ ds\\
                                     & = \int_{t-1}^k AT(t-s)f_{k-1}(s)\ ds + \int_0^{t-k} \hspace{-3mm} A T(t-k-s)f_k(s+k)\ ds\\
                                     & =: a_k(t) + b_k(t).                           
    \end{align*}

    \emph{Step 2}: Using the 
    relation $Af_k(s+k) = Bu_k(s+k)$, $s \in[0,t-k]$
    and~\eqref{eq:input-with-f}, we obtain
    $
        b_k(t)  = \Phi_{t-k}\big( u_k(\argument +k) \big).
    $
    By finite-time $L^p$-$L^p$-admissibility, there exists $C_2>0$ such that 
    \begin{equation}
        \label{eq:admissibility-estimate}
        \begin{aligned}
            \norm{b_k}_{L^p( [k,k+1], X) } &= \norm{t\mapsto \Phi_{t}u_k(\argument +k)}_{L^p([0,1], X)}\\
                                    &\le C_2 \norm{u_k(\argument +k)}_{L^p([0,1], U)}
                                           \le C_2 \norm{u_k}_{L^p([k,k+1], U)}.
        \end{aligned}
    \end{equation}

    \emph{Step 3}:
    On the other hand, using the substitution $s\mapsto k-s$ and the choice of $C_1$ above, we can estimate
    \begin{align*}
        \norm{a_k(t)} & = \norm{\int_0^{1 - (t-k)} A T(s + t-k) f_{k-1}(k-s)\ ds} \\
                     & \le C_1 \int_0^{1 - (t-k)} (s + t-k)^{-1} \norm{f_{k-1}(k-s)}\ ds \\
                     & \le C_1 \int_0^1 (s + t-k)^{-1} \norm{f_{k-1}(k-s)}\ ds.
    \end{align*}
    Observe that $0<(\tau + s)^{-1}\le s^{-1}$ for all $s,\tau \in [0,1]$,
    \[
        \sup_{\tau \in [0,1]} \int_0^1 (\tau + s)^{-1}\ ds <\infty, 
        \quad \text{and} \quad
        \sup_{s \in [0,1]} \int_0^1 (\tau + s)^{-1}\ d\tau <\infty.
    \]
    Therefore,  the integral operator $K: L^p([0,1]) \to L^p([0,1])$ defined by
    \[
        K(\xi)(\tau):= \int_{0}^{1}(\tau + s)^{-1}\xi(s) ds,\quad \tau\in[0,1],\ \xi \in L^p([0,1])
    \]
    satisfies the Schur's test \cite[Theorem~5.6]{Tao}. This leads -- with $\xi_k := \norm{f_{k-1}(k-\argument)}$ and the fact that $f_{k-1} = A_{-1}^{-1}B u_{k-1}$ -- to some $C_3>0$ such that
    \begin{equation}
        \label{eq:integrable-kernel}
        \begin{aligned}
            \norm{a_k}_{L^p( [k,k+1], X) } & \le C_3 \norm{\xi_k}_{L^p( [0,1]) } \\
                                           & = C_3 \norm{ f_{k-1}}_{L^p( [k-1,k], X) }
                                          \le C_4 \norm{ u_{k-1}}_{L^p( [k-1,k], U) };
        \end{aligned}
    \end{equation}
    where $C_4:=C_3\norm{A_{-1}^{-1}B}$.

    \emph{Step 4}:
    Suppose that $p<\infty$. Then using $(a+b)^p \le 2^{p-1} (a^p+b^p)$, we obtain from Step~1 that
    \[
        \norm{ \int_{t-1}^t AT(t-s)f(s)\ ds }^p \le 2^{p-1}\big( \norm{a_k(t)}^p + \norm{b_k(t)}^p\big).
    \]
    Plugging in the estimates~\eqref{eq:integrable-kernel} and~\eqref{eq:admissibility-estimate}, we obtain for each $k\in \bbN_0$ that
    \begin{align*}
        \int_k^{ k+1} \norm{ \int_{t-1}^t AT(t-s)f(s)\ ds }^p \ dt 
                    & \le 2^{p-1} \big( \norm{a_k}_{L^p( [k,k+1], X) }^p + \norm{b_k}_{L^p( [k,k+1], X)}^p \big) \\
                    & \hspace{-12mm}\le 2^{p-1} \big( C_4^p \norm{ u_{k-1}}_{L^p( [k-1,k], U) }^p + C_2^p \norm{u_k}_{L^p([k,k+1], U)}^p\big).
    \end{align*}
    Consequently, there exists $C'>0$ such that 
    \[
        \norm{t\mapsto\int_{t-1}^t AT(t-s)f(s)\ ds}_{L^p([1,\infty), X) }^p \hspace{-3mm} \le C' \sum_{k\in \bbN_0 } \norm{u_k}_{L^p([k,k+1], U)}^p = C' \norm{u}_{L^p(\bbR_+, U)}^p.
    \]
    On the other hand, the estimates~\eqref{eq:integrable-kernel} and~\eqref{eq:admissibility-estimate} together with Step~1 give
    \begin{align*}
        \norm{t\mapsto\int_{t-1}^t AT(t-s)f(s)\ ds}_{L^\infty([1,\infty), X) }
                    & \hspace{-4mm}\le \sup_{k\in\bbN}\big(\norm{a_k}_{L^\infty( [k,k+1], X) } {+} \norm{b_k}_{L^\infty( [k,k+1], X)}\big)\\
                    & \hspace{-17mm}\le \sup_{k\in\bbN}\big( C_4\norm{u_{k-1}}_{L^\infty( [k-1,k], U) } + C_2\norm{u_k}_{L^\infty( [k,k+1], U)}\big)\\
                    & \hspace{-17mm} \le (C_2+C_4) \norm{u}_{L^\infty(\bbR_+, U)}.
    \end{align*}
    In either case, there exists $C>0$ such that~\eqref{eq:tail} holds.
\end{proof}

For evolution equations, maximal $L^p$-regularity is independent of $p\in (1,\infty)$; see \cite[Theorem~4.2]{Dore1993} or \cite[Theorem~17.2.31]{HytoenenNeervenVeraarWeis2023}. For analytic semigroups, the proof can be generalised to our setting:

\begin{proposition}
    \label{prop:p-implies-q}
    In Setting~\ref{setting:system}, suppose that $(T(t))_{t\ge 0}$ is analytic. If $\Sigma(A,B)$ is (in)finite-time $L^p$-$L^p$-admissible for some $p\in [1,\infty]$, then it is even (in)finite-time $L^q$-$L^q$-admissible for all $q\in (1,\infty)$.
\end{proposition}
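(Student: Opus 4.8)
The strategy is to recognise the input map of $\Sigma(A,B)$ as an operator-valued singular integral operator of convolution type and to transplant the Calderón--Zygmund extrapolation argument that underlies the $p$-independence of maximal $L^p$-regularity (\cite[Theorem~4.2]{Dore1993}, \cite[Theorem~17.2.31]{HytoenenNeervenVeraarWeis2023}). First I would dispose of the endpoint exponents: if $p\in\{1,\infty\}$, finite-time $L^p$-$L^p$-admissibility of $\Sigma(A,B)$ together with analyticity forces $t\mapsto T_{-1}(t)B$ to be Bochner-integrable near $0$, whence Young's convolution inequality yields $L^q$-$L^q$-admissibility for all $q\in[1,\infty]$; so from now on assume $p\in(1,\infty)$. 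After translating the generator — harmless over a bounded interval, since it alters the semigroup only by a factor that is bounded and bounded below there — assume $0\in\resSet(A)$, so that $A_{-1}^{-1}B\in\calL(U,X)$. Since $(T(t))_{t\ge0}$ is analytic we have $T_{-1}(t)X_{-1}\subseteq X$ for $t>0$, so
\[
    k(t):=T_{-1}(t)B = AT(t)A_{-1}^{-1}B \in \calL(U,X)\qquad(t>0),
\]
and $\Phi_{(\argument)}u = \int_0^{\argument} k(\argument-s)u(s)\dx s =:\Psi u$. By Proposition~\ref{prop:admissibility-criterion}, finite-time (resp. infinite-time) $L^r$-$L^r$-admissibility of $\Sigma(A,B)$ is precisely boundedness of $\Psi$ from $L^r(I,U)$ to $L^r(I,X)$ for $I=(0,\tau)$ (resp. with a bound uniform in $\tau$, for $I=\bbR_+$).

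The kernel is then seen to be standard: by \cite[Theorem~2.6.13]{Pazy1983}, analyticity and $0\in\resSet(A)$ give $\norm{A^nT(t)}_{\calL(X)}\le C_n t^{-n}$ for $t\in(0,1]$ and $n=1,2$, hence on any bounded interval $\norm{k(t)}_{\calL(U,X)}\le Ct^{-1}\norm{A_{-1}^{-1}B}$ and $\norm{k'(t)}_{\calL(U,X)}=\norm{A^2T(t)A_{-1}^{-1}B}\le Ct^{-2}\norm{A_{-1}^{-1}B}$. Thus $k$ is an operator-valued Calderón--Zygmund convolution kernel. Now, given that $\Psi$ is bounded on $L^p(I,\cdot)$ for the single exponent $p\in(1,\infty)$, the vector-valued Calderón--Zygmund theorem — exactly as used in the proof of \cite[Theorem~17.2.31]{HytoenenNeervenVeraarWeis2023}, which needs nothing of $X,U$ beyond the above kernel regularity — yields that $\Psi$ is of weak type $(1,1)$ and bounded on $L^q(I,\cdot)$ for every $q\in(1,p]$. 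For $q\in[p,\infty)$ I would apply the same result to the adjoint $\Psi'$, again a convolution singular integral, with kernel $t\mapsto k(t)'$ satisfying $\norm{k(t)'}=\norm{k(t)}$ and $\norm{(k(t)')'}=\norm{k'(t)}$; so $\Psi'$ is bounded on $L^{q'}(I,\cdot)$ for all $q'\in(1,p']$, i.e. $\Psi$ is bounded on $L^q(I,\cdot)$ for all $q\in[p,\infty)$. Combining, $\Psi$ is bounded on $L^q(I,\cdot)$ for every $q\in(1,\infty)$, and translating the generator back gives that $\Sigma(A,B)$ is finite-time (resp. infinite-time, if $I=\bbR_+$) $L^q$-$L^q$-admissible for all $q\in(1,\infty)$.

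The main obstacle is the infinite-time case. There the Calderón--Zygmund argument needs the kernel estimates on all of $\bbR_+$, hence exponential decay of $t\mapsto\norm{AT(t)}$, whereas infinite-time $L^p$-$L^p$-admissibility of a general $B$ does not make the whole semigroup exponentially stable (take $B=0$). I would circumvent this by restricting to the smallest closed $T$-invariant subspace $\tilde X\subseteq X$ generated by the reachable states $\bigl\{\int_a^b T_{-1}(\sigma)Bc\dx\sigma:0\le a<b,\ c\in U\bigr\}$; feeding inputs $c\one_{[0,1]}$ into the uniform estimate of Proposition~\ref{prop:admissibility-criterion} gives $\int_0^\infty\norm{T(r)\int_0^1 T_{-1}(\sigma)Bc\dx\sigma}^p\dx r\le c_\infty^p\norm{c}^p$, which, via a Datko-type argument on $\tilde X$, should force $T|_{\tilde X}$ to be exponentially stable, so that the global kernel bounds — and hence the Calderón--Zygmund argument above — apply on $\tilde X$; since $\Psi$ maps into $L^q(\bbR_+,\tilde X)$, this would suffice. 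Making this reduction rigorous — in particular upgrading ``$T(\argument)y\in L^p$ on a dense set'' to genuine exponential stability of $T|_{\tilde X}$, and checking that the operator-valued Calderón--Zygmund machinery applies verbatim to $\calL(U,X)$-valued convolution kernels — are the two technical points I expect to cost the most effort. (For finite-time alone, none of this is needed and the argument of the first three paragraphs is complete.)
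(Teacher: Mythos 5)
Your core strategy for $p\in(1,\infty)$ --- writing $\Phi_{(\argument)}u$ as convolution with the operator-valued kernel $k(t)=AT(t)A_{-1}^{-1}B$, extracting the Calder\'on--Zygmund/H\"ormander estimates from $\norm{A^nT(t)}\lesssim t^{-n}$, and extrapolating from the single exponent $p$ --- is exactly the paper's route (Dore's proof of the $p$-independence of maximal regularity together with the singular-integral theorem \cite[Theorem~II.1.3(a)]{RubioRuizTorrea1986}). Two of your steps, however, do not hold up.

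The endpoint reduction is wrong. You claim that finite-time $L^p$-$L^p$-admissibility for $p\in\{1,\infty\}$, plus analyticity, forces $t\mapsto T_{-1}(t)B$ to be Bochner integrable in $\calL(U,X)$-norm near $0$. The paper's own Example~\ref{ex:LpLinf-adm_but_notLpC-adm} refutes this for $p=\infty$: on $X=c_0(\bbN)$ with $Ae_n=-ne_n$ and $B=A_{-1}$, the system is $L^\infty$-$L^\infty$-admissible, yet $\norm{AT(t)}_{\calL(c_0)}=\sup_n ne^{-nt}\ge e^{-2}t^{-1}$ for $t\le 1$, so $\int_0^1\norm{T_{-1}(t)B}_{\calL(X,X)}\,dt=\infty$; indeed, were the kernel integrable, $B$ would be $L^\infty$-admissible, which it is not. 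Testing the admissibility estimate on inputs $c\one_{[0,\epsilon]}$ only yields the \emph{strong} bound $\int_0^\tau\norm{T_{-1}(t)Bc}\,dt\lesssim\norm{c}_U$ for each fixed $c$, which does not feed into Young's inequality. The repair is that no separate endpoint argument is needed: the theorem of Rubio de Francia--Ruiz--Torrea cited above accepts any $p\in[1,\infty]$ as the initial exponent of boundedness and returns $L^q$-boundedness for all $q\in(1,\infty)$, so the endpoints are absorbed into the same singular-integral argument.

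The infinite-time case is, as you concede, only a sketch: the reduction to the reachable subspace $\tilde X$ leaves open whether $B$ maps into $(\tilde X)_{-1}$, whether the input map genuinely takes values in $\tilde X$, and the upgrade from the uniform $L^p$-bound on reachable states to exponential stability of $T|_{\tilde X}$ via Datko; none of this is carried out, so the proof is incomplete precisely where you expect it to be. Note also that the paper routes the finite-time case differently and more economically: instead of redoing Calder\'on--Zygmund theory on a bounded interval (with the attendant care about truncating the kernel and extending the operator to $L^p(\bbR)$), it shifts the generator by $-2\omega_0(A)$ (Remark~\ref{rem:admissibility-translation}) to obtain an exponentially stable analytic semigroup, invokes Theorem~\ref{thm:infinite-admissibility-sufficient} to upgrade finite-time to infinite-time $L^p$-$L^p$-admissibility, runs the whole-line singular-integral argument once, and shifts back. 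Adopting that reduction would let you delete your entire finite-interval discussion and confine the analytic work to $\bbR_+$ with globally decaying kernel bounds.
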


The proof of Proposition~\ref{prop:p-implies-q} requires the following simple observation:

\begin{remark}
    \label{rem:admissibility-translation}
    Note that for $\omega \in \bbC$, the semigroup generated by $A+\omega$ is given by $(e^{\omega t} T(t))_{t\ge 0}$. Using this, it is easy to check for $p\in [1,\infty]$ that $\Sigma(A,B)$ is finite-time $L^p$-$L^p$-admissible if and only if $\Sigma(A+\omega,B)$ is finite-time $L^p$-$L^p$-admissible.\qedhere
\end{remark}

\begin{proof}[Proof of Proposition~\ref{prop:p-implies-q}]
    We outline how the proof of \cite[Theorem~17.2.31]{HytoenenNeervenVeraarWeis2023} can be adapted to our setting:~
    Assume first that $\Sigma(A,B)$ is infinite-time $L^p$-$L^p$-admissible for some $p\in [1,\infty]$ and let $q\in (1,\infty)$. Without loss of generality, assume $0\in \resSet(A)$. As in~\eqref{eq:input-with-f}, the corresponding input operator is given by convolution with the kernel
    \[
        K:= \one_{(0,\infty)}TA_{-1}^{-1}B.
    \]
    
    Infinite-time $L^p$-$L^p$-admissibility guarantees that the input operator is bounded from $L^p(\bbR_+, U)$ to  $L^p(\bbR_+, X)$. 
    On the other hand, using analyticity of the semigroup and repeating the 
    arguments of \cite[Lemma~17.2.30]{HytoenenNeervenVeraarWeis2023}, one can show that $K$ satisfies a Hörmander type estimate and in turn -- employing a multiplier theorem \cite[Theorem~11.2.5]{HytoenenNeervenVeraarWeis2023} -- the input operator is bounded from $L^q(\bbR_+,U)$ to $L^q(\bbR_+,X)$.  It follows that $\Sigma(A,B)$ is infinite-time $L^q$-$L^q$-admissible.

    Next, consider the case that $\Sigma(A,B)$ is finite-time $L^p$-$L^p$-admissible. Owing to 
    Remark~\ref{rem:admissibility-translation} and Theorem~\ref{thm:infinite-admissibility-sufficient}, the system $\Sigma(A-2\omega_0(A),B)$ is infinite-time $L^p$-$L^p$-admissible (here $\omega_0(A)$ denotes the growth bound of the semigroup generated by $A$).
    Therefore, our previous arguments show that $\Sigma(A-2\omega_0(A),B)$ is even infinite-time $L^q$-$L^q$-admissible. Once again, appealing to Remark~\ref{rem:admissibility-translation}, we obtain that $\Sigma(A,B)$ is finite-time $L^q$-$L^q$-admissible.
\end{proof}

We point out that \cite[Theorem~II.1.3(a)]{RubioRuizTorrea1986} applied above allows the extension of the so-called \emph{singular integral operators (of convolution type)} -- see \cite[Definition~II.1.2]{RubioRuizTorrea1986} for a definition -- between $L^p$ spaces for $p\in [1,\infty]$ to bounded operators between $L^q$ spaces for $q\in (1,\infty)$.

In Proposition~\ref{prop:p-implies-q}, (in)finite-time $L^1$-$L^1$-admissibility is stronger than  the corresponding $L^q$-$L^q$-admissibility for $q\in (1,\infty)$. This can be seen from the corresponding counterexamples for maximal regularity \cite[Examples~17.4.6 and~17.4.7]{HytoenenNeervenVeraarWeis2023} owing to Proposition~\ref{prop:maximal-regularity-implies-admissibility}; cf. the theorem of Guerre-Delabriere \cite[Corollary~17.4.5]{HytoenenNeervenVeraarWeis2023}.

While $L^p$-$L^p$-admissibility implies $L^p$-$L^q$-admissibility can be checked easily for $q\le p$, the implication may fail for $q>p$, as the following example shows:

\begin{example}
    \label{ex:LpLp-adm_but_notLpLinf-adm}
    On the Hilbert space 
    \begin{align}
    \label{eq:ell2}
        X:=\ell^2(\bbN)=\left\{(x_k)_{k=1}^\infty \subset \R^{\N}:\ \sum_{k=1}^\infty |x_k|^2 <\infty\right\},
    \end{align}
    consider the diagonal semigroup $(T(t))_{t \ge 0}$ generated by
    \[
        Ae_n = -n e_n \quad \text{with}\quad \dom{A} := \left\{ (x_n)\in X: (n x_n) \in X\right\}.
    \]
    We show that the system $\Sigma(A, A_{-1})$
    \begin{enumerate}[\upshape (a)]
        \item is $L^p$-$L^p$-admissible for all $p\in (1,\infty)$ but
        \item not $L^p$-$L^\infty$-admissible for any $p\in [1,\infty]$.
    \end{enumerate}
    
    (a) Since $A$ is the generator of a bounded analytic semigroup on a Hilbert space, for each $B\in \calL(X, X_{-1})$ (in particular, for $B=A_{-1}$) and $p\in (1,\infty)$, the system $\Sigma(A,B)$
    is $L^p$-$L^p$-admissible by Corollary~\ref{cor:hilbert-analytic-admissible}. 
    
    (b) Let $p\in [1,\infty]$ and $u \in L^p([0,1],X)$ be given by $u(s)=\sum_n (u(s))_n e_n$ where
    \[
        (u(s))_n := \one_{\left[\frac{1}{2n},\frac1n\right]}(s) = 
        \begin{cases}
            1, \qquad & \text{for }s \in \left[\frac{1}{2n},\frac1n\right],\\
            0,\qquad & \text{otherwise}.
        \end{cases}
    \]
    
    Then for $\tau\le 1$, we have
    \begin{align*}
        \norm{\Phi_{\tau}u}_X^2 &= \norm{\int_0^{\tau} T(\tau-s)A_{-1}u(s)\ ds}_X^2
        = \norm{\Big(\int_0^{\tau} e^{-n(\tau-s)}(-n)\one_{\left[\frac{1}{2n},\frac1n\right]} ds\Big)_{n\in\N}}_X^2\\
        &\ge \sum_{n\ge \frac1\tau} \left(n\int_{\frac{1}{2n}}^{\frac{1}{n}} e^{-n(\tau-s)}\ ds\right)^2
                               = \big(e-e^{1/2}\big)^2 \sum_{n\ge \frac1\tau} e^{-2\tau n}
                               = \frac{e^{-2c\tau}(e-e^{1/2})^2}{1-e^{-2\tau}}
    \end{align*}
    for some $c>0$
    and so $\Phi_{(\argument)}u \notin L^\infty([0,1],X)$.

Alternatively, since $L^p$-admissibility of a control operator $B\in \calL(U, X_{-1})$ for $p<\infty$ implies \emph{zero-class} $L^q$-admissibility for all $q>p$, so by \cite[Propositions~1.3 and~1.4]{JacobSchwenningerWintermayr2022}, the control operator $A_{-1}$ in Example~\ref{ex:LpLp-adm_but_notLpLinf-adm} is not $L^p$-admissible for any $p\in [1,\infty]$ and hence $\Sigma(A, A_{-1})$ is not $L^p$-$L^\infty$-admissible for any $p\in [1,\infty]$.
\end{example}

\begin{example}
    \label{ex:LpLinf-adm_but_notLpC-adm}
    Consider the same semigroup as in Example~\ref{ex:LpLp-adm_but_notLpLinf-adm} above but instead with the state space $X=c_0(\bbN)$. In this case, the semigroup has $L^\infty$-maximal-regularity but $B=A_{-1}$ is not $L^\infty$-admissible; see \cite[Example~2.3]{JacobSchwenningerWintermayr2022} and Remark~\ref{rem:maximal-regularity}. In turn, $\Sigma(A,A_{-1})$ is $L^\infty$-$L^\infty$-admissible but not $L^\infty$-$\mathrm C$-admissible.
\end{example}

To wrap up this section, we provide a summary of the connections among different admissibility concepts:

\begin{theorem}
    \label{thm:Simple-relationships}
    The relationships in Figure~\ref{fig:Admissibility-Relations} hold.
\end{theorem}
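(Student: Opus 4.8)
The proof is a synthesis of the results established throughout Section~\ref{sec:Linear systems and their admissibility}; the plan is to justify every arrow of Figure~\ref{fig:Admissibility-Relations} individually, and to exhibit a separating example for every arrow that is \emph{absent}. First I would dispatch the definitional implications: infinite-time $L^p$-$L^q$-admissibility implies finite-time $L^p$-$L^q$-admissibility directly (take $c(t)\le c_\infty$ in Proposition~\ref{prop:admissibility-criterion}); $L^p$-$\mathrm C$-admissibility implies $L^p$-$L^q$-admissibility for every $q$ because $\mathrm C([0,t],X)\hookrightarrow L^q([0,t],X)$; $L^p$-$L^q$-admissibility implies $L^p$-$L^{q'}$-admissibility whenever $q'\le q$ by Hölder's inequality on the bounded interval $[0,t]$; and for $p<\infty$, $L^p$-admissibility implies $L^p$-$\mathrm C$-admissibility by \cite[Proposition~2.3]{Weiss1989b}.

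Next the semigroup-theoretic inputs. If $(T(t))_{t\ge 0}$ is exponentially stable and $B$ is bounded, Proposition~\ref{prop:admissibility-bounded} yields infinite-time $L^p$-$L^q$-admissibility for all $p\le q$. In the diagonal case $p=q$, Proposition~\ref{prop:maximal-regularity-implies-admissibility} identifies $L^p$-$L^p$-admissibility of $\Sigma(A,A_{-1})$ (equivalently of $\Sigma(A,B)$ for every $B$) with maximal $L^p$-regularity on bounded intervals, and likewise in the $\bbR_+$ case with the infinite-time version; Corollary~\ref{cor:hilbert-analytic-admissible} then covers (bounded) analytic semigroups on Hilbert spaces, Theorem~\ref{thm:infinite-admissibility-sufficient} upgrades finite-time to infinite-time $L^p$-$L^p$-admissibility under exponential stability plus analyticity, and Proposition~\ref{prop:p-implies-q} propagates $L^p$-$L^p$-admissibility to $L^q$-$L^q$-admissibility for all $q\in(1,\infty)$ when the semigroup is analytic.

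For the non-implications I would then check each missing arrow: Example~\ref{ex:Scalar-system-Counterex} shows that exponential stability with bounded $B$ does \emph{not} give infinite-time $L^p$-$L^q$-admissibility when $p>q$, and in particular that finite-time $L^p$-$L^q$-admissibility does not imply the infinite-time version in general; Example~\ref{ex:LpLp-adm_but_notLpLinf-adm} separates $L^p$-$L^p$-admissibility from $L^p$-$L^\infty$-admissibility (hence from $L^p$-$L^q$-admissibility for $q>p$, and a fortiori from $L^p$-admissibility); Example~\ref{ex:LpLinf-adm_but_notLpC-adm} separates $L^\infty$-$L^\infty$-admissibility from $L^\infty$-$\mathrm C$-admissibility; and the failure of the $L^1$ versus $L^q$ ($q\in(1,\infty)$) equivalence implicit in Proposition~\ref{prop:p-implies-q} is witnessed by the maximal-regularity counterexamples \cite[Examples~17.4.6 and~17.4.7]{HytoenenNeervenVeraarWeis2023} transported through Proposition~\ref{prop:maximal-regularity-implies-admissibility}.

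The main obstacle is not analytical but organizational: ensuring the diagram is \emph{exhaustive and sharp} — that every implication a reader might hope for is either proved above or explicitly refuted by one of the examples \emph{in the generality labelled on the relevant nodes} (exponential stability, analyticity, Hilbert versus general Banach state space), and conversely that no arrow in Figure~\ref{fig:Admissibility-Relations} has been drawn without a corresponding result or citation to support it. Once the figure is laid out, the proof itself amounts to listing, for each edge and each deliberately omitted edge, the precise reference from this section.
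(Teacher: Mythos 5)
Your proposal matches the paper's proof in both structure and content: the paper likewise disposes of the left-hand downward arrows as immediate, cites Weiss for $L^p$-admissibility $\Rightarrow$ $L^p$-$\mathrm C$-admissibility when $p<\infty$, invokes Theorem~\ref{thm:infinite-admissibility-sufficient} for the finite-/infinite-time $L^p$-$L^p$ equivalence under exponential stability and analyticity, and relies on Examples~\ref{ex:LpLp-adm_but_notLpLinf-adm} and~\ref{ex:LpLinf-adm_but_notLpC-adm} for the crossed-out arrows. The only edge of Figure~\ref{fig:Admissibility-Relations} you do not explicitly enumerate is the equivalence of $L^p$-admissibility with its infinite-time version for exponentially stable semigroups, which the paper settles by citation to \cite[Lemma~2.9]{JacobNabiullinPartingtonSchwenninger2018}.
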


\begin{proof}
    The downward implications on the left are easy to see.
    We mention here the other implications:
    \begin{itemize}
        \item It is well-known that infinite-time $L^p$-admissibility for exponentially stable semigroups is equivalent to $L^p$-admissibility; see \cite[Lemma~2.9]{JacobNabiullinPartingtonSchwenninger2018} or (for $p=2$ in a dual form) \cite[Lemma~1.1]{Gra95}.
        
        \item Suppose that $\Sigma(A,B)$ is $L^p$-admissible for some $p<\infty$. Then by \cite[Remark~2.6]{Weiss1989b}, we have $\phi(\argument,x,u) \in\mathrm C ([0,t], X)$ for all $u\in L^p([0,t], U)$. Thus, $\Sigma(A,B)$ is $L^p$-$\mathrm C$-admissible.
                
    \item Theorem~\ref{thm:infinite-admissibility-sufficient} shows that if $(T(t))_{t\ge 0}$ is an exponentially stable analytic semigroup, then $\Sigma(A,B)$ is finite-time $L^p$-$L^p$-admissible $\Leftrightarrow$ $\Sigma(A,B)$ is infinite-time $L^p$-$L^p$-admissible.\qedhere
    \end{itemize}
\end{proof}

\begin{figure}
\hspace{-4mm}
    \begin{tikzpicture}[admnode]
    \begin{scope}[xshift=-10mm]
        \coordinate (A) at (0,0);
        \coordinate (B) at (7.8,0);
        \coordinate (B2) at (7.8,1.5);
        \coordinate (C) at (0,-2.2);
        \coordinate (D) at (9.2,-2.2);
        \coordinate (E) at (0,-4.4);
        
        \node (LpC) at (A) {$L^p$-$\mathrm C$-admissibility};
        \node (LpAdm) at (B) {$L^p$-admissibility};
        
        \node[text centered,text width=8cm] at (B2)
        {infinite-time $L^p$-admissibility};
        
        \node (LpLq) at (C) {$L^p$-$L^q$-admissibility};
        
        \node (InfLpLq) [text centered,text width=4cm] at (D)
        {infinite-time\\ $L^p$-$L^p$-admissibility};
        
        \node (DualLpLq) at (E)
        {$L^{p'}$-$L^{q'}$-admissibility};
        
        \draw[implback] ($(B)+(-0.2,0.2)$) -- ++(0,0.95);
        \draw[ecvsectioncolor,impl,hyp]     ($(B)+(0.2,0.2)$)  -- ++(0,0.95);
        
        \node[ecvsectioncolor,text width=4cm] at ($(B)+(2.6,0.75)$)
        {\small $T$ is exp.\ stable\\ \hspace{-1mm}\cite[Lemma~2.9] {JacobNabiullinPartingtonSchwenninger2018}};
        
        \draw[ecvsectioncolor,impl,hyp] ($(B)+(-1.8,0.2)$) -- ($(A)+(1.9,0.2)$);
        \draw[implback]             ($(B)+(-1.8,-0.2)$) -- ($(A)+(1.9,-0.2)$);
        
        \node[ecvsectioncolor] at ($(A)!0.5!(B)+(0,0.6)$)
        {\small $p\in[1,\infty)$, by \cite[Remark~2.6]{Weiss1989b}};
        
        \draw[impl] (LpC) -- (LpLq);
        \draw[ecvsectioncolor,impl,hyp] (LpLq) -- (DualLpLq);
        
        \draw[ecvsectioncolor,impleq,hyp]
        (LpLq) -- ($(D)+(-1.6,0)$)
        node[midway,above,sloped]
        {\small $T$ is exp.\ stable + analytic, $q{=}p \in [1,\infty]$}
        node[midway,below,sloped]
        {\small Theorem~\ref{thm:infinite-admissibility-sufficient}};
        
        \draw[counterexample,->]
        ($(C)+(0.4,0.3)$) -- ($(A)+(0.4,-0.3)$);
        
        \node[red] at ($(A)!0.5!(C)+(1.6,0)$)
        {\small Example~\ref{ex:LpLinf-adm_but_notLpC-adm}};
        
        \draw[counterexample,<-,degil]
        ($(C)+(0.4,-0.3)$) -- ($(E)+(0.4,0.35)$);
        
        \node[red] at ($(C)!0.5!(E)+(1.6,0)$)
        {\small Example~\ref{ex:LpLp-adm_but_notLpLinf-adm}};
        
        \node[ecvsectioncolor,align=left]
        at ($(C)!0.5!(E)+(-1,0)$)
        {\small $p' \in [p,\infty]$,\\ $q'\in [1,q]$};
    \end{scope}
    
    \end{tikzpicture}
    \caption{Relations between admissibility notions for fixed $p,q\in[1,\infty]$.
    Solid arrows are unconditional; dotted arrows use the indicated additional assumptions; slashed arrows indicate failure in general.}
    \label{fig:Admissibility-Relations}
\end{figure}

\section{Input-to-state stability in integral norms}
    \label{sec:iss-integral}

Having introduced the admissibility concepts, we proceed to the stability and robustness analysis. 
We start with the classical input-to-state stability concept for systems with integrable inputs. Recall the following classes of comparison functions:
\begin{align*}
     \calK &:= \{ \gamma: \bbR_+ \to \bbR_+ : \gamma\text{ is continuous, }\gamma(0)=0,\text{ and strictly increasing}\},\\
    \calL &:= \{ \gamma: \bbR_+ \to \bbR_+ : \gamma\text{ is continuous, strictly decreasing, and }\lim_{t\to\infty}\gamma(t)=0\},\\
    \calK_{\infty} &:= \{\gamma \in \calK: \gamma\text{ is unbounded}\}, \\
    \calK\calL &:= \{ \beta: \bbR_+^2 \to \bbR_+ : \beta\text{ is continuous, }\beta(\argument,t)\in \calK, \text{ and }\beta(r,\argument)\in \calL~\forall t\ge 0,r> 0\}.
\end{align*}
We refer to \cite{Kellett2014} and \cite[Appendix A]{Mironchenko2023a} for an overview of the properties of comparison functions.

\begin{definition}
    \label{def:LpISS}
    For $p\in [1,\infty]$, we say that $\Sigma(A,B)$ in Setting~\ref{setting:system} is \emph{$L^p$-input-to-state stable ($L^p$-ISS)} if it is $L^p$-admissible, and there are $\beta \in \KL$ and $\gamma\in \calK_{\infty}$ such that its mild solution given by~\eqref{eq:flow} for all $x\in X$ and $u\in L^p([0,t], U)$ satisfies
    \begin{equation}
        \label{eq:p-iss}
         \norm{\phi(t,x,u)} \le \beta(\norm{x},t) + \gamma\left(\norm{u}_{L^p([0,t], U)}\right) \quad \forall\ t\ge 0.
        \tag{\(L^p\)-ISS}
    \end{equation}
\end{definition}

Inspired by integral-to-integral stability concepts introduced by Sontag \cite{Sontag1998}, we want to study the following notion:
\begin{definition}
    \label{def:LpLqISS}
    For $p,q\in [1,\infty]$, we say that
    $\Sigma(A,B)$ in Setting~\ref{setting:system} 
    is
     \emph{$L^p$-$L^q$-input-to-state stable ($L^p$-$L^q$-ISS)} if it is $L^p$-$L^q$-admissible, and there are $\sigma,\gamma\in \calK_{\infty}$ such that the mild solution given by~\eqref{eq:flow} satisfies
    \begin{equation}
        \label{eq:p-q-iss}
         \norm{\phi(\argument,x,u)}_{L^q([0,t], X)} \le \sigma(\norm{x})+ \gamma\left(\norm{u}_{L^p([0,t], U)}\right) \quad \forall\ t\ge 0
        \tag{\(L^p\)-\(L^q\)-ISS}
    \end{equation}
    for all $x\in X$ and $u\in L^p([0,t], U)$.
\end{definition}

Since $L^p$-ISS is a pointwise estimate, $L^p$-ISS implies $L^p$-$L^\infty$-ISS:
\begin{proposition}
    \label{prop:p-infty-iss}
    In Setting~\ref{setting:system}, if the system $\Sigma(A,B)$ is $L^p$-ISS for some $p\in [1,\infty]$, then $\Sigma(A,B)$ is $L^p$-$L^\infty$-ISS. 
\end{proposition}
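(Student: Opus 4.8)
The first implication is the routine one: $L^p$-ISS is a pointwise-in-time estimate, and we only have to read it off along the whole interval $[0,t]$. Assume $\Sigma(A,B)$ is $L^p$-ISS with $\beta\in\KL$ and $\gamma\in\calK_\infty$, and fix $t\ge 0$, $x\in X$, $u\in L^p([0,t],U)$. By $L^p$-admissibility, $\phi(s,x,u)\in X$ for every $s$, and since $\beta(\norm x,\argument)\in\calL$ is decreasing while $s\mapsto\norm u_{L^p([0,s],U)}$ is increasing, the $L^p$-ISS bound gives, for all $s\in[0,t]$,
\[
    \norm{\phi(s,x,u)}\le \beta(\norm x,s)+\gamma\big(\norm u_{L^p([0,s],U)}\big)\le \beta(\norm x,0)+\gamma\big(\norm u_{L^p([0,t],U)}\big).
\]
Passing to the (essential) supremum over $s\in[0,t]$ and setting $\sigma(r):=\beta(r,0)+r$ — the summand $r$ only serves to promote the $\calK$-function $\beta(\argument,0)$ to a $\calK_\infty$-function — yields $\norm{\phi(\argument,x,u)}_{L^\infty([0,t],X)}\le\sigma(\norm x)+\gamma(\norm u_{L^p([0,t],U)})$. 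Taking $x=0$ in this bound shows in addition that $\Phi_{(\argument)}u\in L^\infty([0,t],X)$, so $\Sigma(A,B)$ is $L^p$-$L^\infty$-admissible; hence it is $L^p$-$L^\infty$-ISS.

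For the converse, suppose $\Sigma(A,B)$ is $L^p$-$L^\infty$-ISS (with $\sigma,\gamma\in\calK_\infty$) and, in addition, $L^p$-admissible — this last assumption supplies precisely the admissibility clause in the definition of $L^p$-ISS and, for $p<\infty$, also makes $\phi(\argument,x,u)$ continuous with values in $X$, so that the supremum over $[0,t]$ controls the value at $t$:
\[
    \norm{\phi(t,x,u)}\le\norm{\phi(\argument,x,u)}_{L^\infty([0,t],X)}\le\sigma(\norm x)+\gamma\big(\norm u_{L^p([0,t],U)}\big)\qquad(t\ge0).
\]
(The case $p=\infty$ needs a little more care, since the mild solution need not be continuous — cf. Example~\ref{ex:LpLinf-adm_but_notLpC-adm}.) To turn this into an $L^p$-ISS estimate one must replace the $t$-independent term $\sigma(\norm x)$ by some $\beta\in\KL$, i.e.\ produce \emph{uniform decay} of the free evolution. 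The plan is: put $u=0$ to get $\sup_{t\ge0}\norm{T(t)x}\le\sigma(\norm x)$ for all $x$, hence $M:=\sup_{t\ge0}\norm{T(t)}<\infty$ by the uniform boundedness principle; then appeal to exponential stability of $(T(t))_{t\ge0}$ and to the known fact that a linear system whose semigroup is exponentially stable is $L^p$-ISS if and only if it is finite-time $L^p$-admissible \cite{JacobNabiullinPartingtonSchwenninger2018}, which finishes the argument.

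The step I expect to be the real obstacle is exactly this exponential stability of $(T(t))_{t\ge0}$: because the $L^p$-$L^\infty$-ISS bound carries no decaying prefactor in front of $\sigma(\norm x)$, it only yields boundedness of the semigroup, and boundedness together with $L^p$-admissibility is not by itself enough — already $\dot x=u$ on $X=\bbR$ is $L^1$-$L^\infty$-ISS and $L^1$-admissible, yet its free flow $\phi(t,x,0)=x$ does not decay and so it is not $L^1$-ISS. Consequently the converse direction should be understood to rely on additional structure, most naturally on exponential stability of the semigroup (which is the standing situation in the applications of interest); under that hypothesis the implication is immediate from \cite{JacobNabiullinPartingtonSchwenninger2018}.
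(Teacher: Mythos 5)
Your forward direction is correct and is, in substance, the paper's argument: the paper invokes the fact that a linear $L^p$-ISS system is exponentially $L^p$-ISS with linear gain, so that $\norm{\phi(t,x,u)}\le M\norm{x}e^{-at}+G\norm{u}_{L^p([0,t],U)}$, and then reads off the supremum over $[0,t]$; you instead use the raw $(\beta,\gamma)$-estimate together with monotonicity of $\beta(\norm x,\argument)$ and of $s\mapsto\norm{u}_{L^p([0,s],U)}$. The two routes are equivalent in effect, and your remark that setting $x=0$ supplies the required $L^p$-$L^\infty$-admissibility makes explicit a point the paper leaves implicit.

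On the converse your suspicion is justified, and you have in fact found more than a gap in your own argument: the paper disposes of this direction with \enquote{the converse claim is obvious}, but your counterexample shows that the converse is false as stated. For $\dot x=u$ on $X=U=\bbR$ (i.e.\ $A=0$, $B=1$) one has $\modulus{\phi(s,x,u)}\le\modulus{x}+\norm{u}_{L^1([0,t],\bbR)}$ for all $s\le t$, so the system is $L^1$-admissible, $L^1$-$L^\infty$-admissible, and $L^1$-$L^\infty$-ISS with $\sigma=\gamma=\mathrm{id}$; yet it is not $L^1$-ISS, since $\phi(t,x,0)=x$ admits no bound by a $\KL$-function of $(\norm{x},t)$. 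The root cause is exactly the one you name: the term $\sigma(\norm{x})$ in \eqref{eq:p-q-iss} carries no decay in $t$, and for $q=\infty$ the Datko--Pazy argument used in Proposition~\ref{prop:p-q-iss-implies-exponential-stability} for $q<\infty$ is unavailable, so only boundedness of the semigroup can be extracted. The converse does become true once exponential stability of $(T(t))_{t\ge0}$ is added to the hypotheses, since $L^p$-admissibility together with exponential stability yields $L^p$-ISS by \cite[Theorem~2.1.21]{Mironchenko2023b}; this is precisely the repair you propose. In short, your proof of the proposition as stated is necessarily incomplete, but the incompleteness is inherited from the statement rather than from your argument.
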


\begin{proof}
    Since $\Sigma(A,B)$ is $L^p$-ISS, and $\beta \in\KL$,
    \[
         \norm{\phi(t,x,u)} \le \beta(\norm{x},0) + \gamma\left(\norm{u}_{L^p([0,t], U)}\right) \quad \forall\ t\ge 0,
    \]
    and taking the essential supremum over $[0,t]$, we obtain the claim.
\end{proof}
Note that $L^p$-$L^\infty$-ISS does not imply $L^p$-ISS (or even global attractivity for zero inputs), already for $A=B=0$.

Whereas $L^p$-$L^\infty$-ISS follows immediately from the classical $L^p$-ISS, for $q<\infty$ we have the following characterisation of $L^p$-$L^q$-ISS inspired by the analogous characterisation of $L^p$-ISS in \cite[Theorem~2.1.21]{Mironchenko2023b}.

\begin{proposition}
    \label{prop:p-q-iss-implies-exponential-stability}
    In Setting~\ref{setting:system}, the following assertions are equivalent for $p\in [1,\infty]$ and $q\in [1,\infty)$.
    \begin{enumerate}[\upshape (i)]
        \item The system $\Sigma(A,B)$ is $L^p$-$L^q$-ISS.
        \item The system $\Sigma(A,B)$ is infinite-time $L^p$-$L^q$-admissible and $(T(t))_{t \ge 0}$ is exponentially stable.
        \item There exist $M, G>0$ such that
            \[
                \norm{\phi(\argument,x,u)}_{L^q([0,t], X)} \le M\norm{x} + G \norm{u}_{L^p([0,t], U)}
            \]
            for all $t\ge 0, x\in X$, and $u\in L^p([0,t], U)$.
    \end{enumerate}
\end{proposition}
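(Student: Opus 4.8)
The plan is to prove the cycle of implications (i) $\Rightarrow$ (iii) $\Rightarrow$ (ii) $\Rightarrow$ (i), in analogy with the characterisation of $L^p$-ISS in \cite[Theorem~2.1.21]{Mironchenko2023b}. The decisive point is that, because $\Sigma(A,B)$ is \emph{linear}, the arbitrary $\calK_{\infty}$-gains $\sigma,\gamma$ in \eqref{eq:p-q-iss} can always be replaced by linear ones; the rest reduces to short, standard estimates. Note that—in contrast to the $L^p$-ISS case—it is genuinely \emph{infinite-time} $L^p$-$L^q$-admissibility that must appear in (ii), since by Example~\ref{ex:Scalar-system-Counterex} it is not implied by finite-time admissibility together with exponential stability.

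For (i) $\Rightarrow$ (iii) I would use the positive homogeneity of the flow: as $\phi(\argument,x,u)=T(\argument)x+\Phi_{(\argument)}u$ is linear in $(x,u)$, we have $\phi(\argument,\lambda x,\lambda u)=\lambda\,\phi(\argument,x,u)$ for all $\lambda>0$. Fix $t\ge 0$, $x\in X$ and $u\in L^p([0,t],U)$ and set $r:=\norm{x}+\norm{u}_{L^p([0,t],U)}$; the case $r=0$ is trivial. Applying \eqref{eq:p-q-iss} to the rescaled data $x/r$ and $u/r$ (whose norms are each at most $1$) and using monotonicity of $\sigma$ and $\gamma$ yields $\norm{\phi(\argument,x/r,u/r)}_{L^q([0,t],X)}\le\sigma(1)+\gamma(1)$; multiplying through by $r$ produces (iii) with $M=G=\sigma(1)+\gamma(1)$.

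For (iii) $\Rightarrow$ (ii): setting $u\equiv 0$ in (iii) gives $\norm{T(\argument)x}_{L^q([0,t],X)}\le M\norm{x}$ for all $t\ge 0$, hence $\norm{T(\argument)x}_{L^q(\bbR_+,X)}\le M\norm{x}<\infty$ for every $x\in X$; since $q<\infty$, the Datko--Pazy theorem (see, e.g., \cite[Theorem~V.1.8]{EngelNagel2000}) forces $(T(t))_{t\ge 0}$ to be exponentially stable. Setting $x=0$ in (iii) gives $\norm{\Phi_{(\argument)}u}_{L^q([0,t],X)}\le G\norm{u}_{L^p([0,t],U)}$ for all $t\ge 0$ and all $u$, so by Proposition~\ref{prop:admissibility-criterion}—with a constant independent of $t$—the system is infinite-time $L^p$-$L^q$-admissible. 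For (ii) $\Rightarrow$ (i): exponential stability gives $M,\omega>0$ with $\norm{T(t)}\le Me^{-\omega t}$, whence (using $q<\infty$) $\norm{T(\argument)x}_{L^q([0,t],X)}\le M(q\omega)^{-1/q}\norm{x}$ for every $t\ge 0$, while infinite-time admissibility gives $\norm{\Phi_{(\argument)}u}_{L^q([0,t],X)}\le c_\infty\norm{u}_{L^p([0,t],U)}$; the triangle inequality in $L^q([0,t],X)$ then yields \eqref{eq:p-q-iss} with the linear (hence $\calK_{\infty}$) gains $\sigma(r):=M(q\omega)^{-1/q}r$ and $\gamma(r):=(c_\infty+1)r$, and since infinite-time $L^p$-$L^q$-admissibility implies the finite-time version, $\Sigma(A,B)$ is $L^p$-$L^q$-ISS. (The direct implication (iii) $\Rightarrow$ (i) is then also clear, since one may simply take linear gains.)

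I do not expect a real obstacle: each of the three implications is essentially a one-line estimate once the right tool is invoked. The single conceptual ingredient is the homogeneity argument in (i) $\Rightarrow$ (iii)—linearising the $\calK_{\infty}$-gains—which relies on linearity of $\Sigma(A,B)$ and has no analogue for nonlinear systems; one must also be mindful that the standing hypothesis $q<\infty$ is used in two places, namely for the Datko--Pazy theorem and for the finiteness of $\int_0^\infty e^{-q\omega s}\,ds$.
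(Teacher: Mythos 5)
Your proof is correct and follows essentially the same route as the paper: linearity is used to replace the $\calK_\infty$-gains by linear ones, Datko--Pazy (with $q<\infty$) yields exponential stability from the $u=0$ case, and the triangle inequality closes the cycle. The only cosmetic difference is that you linearise both gains at once by jointly rescaling $(x,u)$ by $r=\norm{x}+\norm{u}_{L^p}$, whereas the paper treats the $x=0$ and $u=0$ cases separately; both are valid.
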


\begin{proof}
    Of course, (iii) implies (i), whereas the implication (ii) $\Rightarrow$ (iii) is a simple application of the triangle inequality.

    ``(i) $\Rightarrow$ (iii)'':
    First of all, setting $u=0$ in~\eqref{eq:p-q-iss}, we obtain
    \[
        \int_0^t \norm{T(s)x}^qds <\infty \quad \forall\ x\in X, \quad t\ge 0,
    \]
    and so $(T(t))_{t\ge 0}$ is exponentially stable by the theorem of Datko-Pazy \cite[Theorem~V.1.8]{EngelNagel2000}; here we have used $q<\infty$.
    In this case, there exist $C,\omega>0$ such that $\norm{T(t)}\le Ce^{-\omega t}$ for all $t\ge 0$. In particular, 
    \[
        \norm{T(\argument)x}_{L^q([0,t], X)} \le C \left(\int_0^t e^{-\omega s q} \ ds\right)^{1/q} \norm{x} \le M\norm{x}
    \]
    for $M := C (\omega q)^{ -1/q}>0$.
    Moreover, setting $x=0$ in~\eqref{eq:p-q-iss}, we get
    \[
        \norm{\Phi_{(\argument)}u}_{L^q([0,t], X)} \le \gamma(1)
    \]
    whenever $\norm{u}_{L^p([0,t], U)}= 1$. So,
    linearity implies that
    \[
        \norm{\Phi_{(\argument)}u}_{L^q([0,t], X)} \le \gamma(1) \norm{u}_{L^p([0,t], U)}.
    \]
    Setting $G:=\gamma(1)$, and using the triangle inequality, we obtain~(iii). 

    ``(iii) $\Rightarrow$ (ii)'': Exponential stability is already shown above and infinite-time $L^p$-$L^q$-admissibility is immediate by setting $x:=0$ into the inequality in~(iii).
\end{proof}

Proposition~\ref{prop:p-q-iss-implies-exponential-stability} reduces the analysis of $L^p$-$L^q$-ISS for linear systems to infinite-time $L^p$-$L^q$ admissibility and exponential stability of the semigroup.
As recalled before, infinite-time $L^p$-admissibility for exponentially stable semigroups is equivalent to $L^p$-admissibility. 
If $p= q$ and the semigroup is analytic, we have shown a corresponding result for $L^p$-$L^p$-admissibility in Theorem~\ref{thm:infinite-admissibility-sufficient}. Consequently, Proposition~\ref{prop:p-q-iss-implies-exponential-stability} can be improved for analytic semigroups in the case $p=q$:

\begin{corollary}
    \label{cor:p-p-iss-characterisation}
    In Setting~\ref{setting:system}, assume that $(T(t))_{t \ge 0}$ is analytic. Then:
    \begin{center}
        $\Sigma(A,B)$ is $L^p$-$L^p$-ISS for some $p\in [1,\infty)$ \quad $\Leftrightarrow$\quad  $\Sigma(A,B)$ is $L^q$-$L^q$-ISS $\forall \ q\in (1,\infty)$.
    \end{center} Moreover, the following are equivalent for any  $p\in [1,\infty)$.
    \begin{enumerate}[\upshape (i)]
        \item The system $\Sigma(A,B)$ is $L^p$-$L^p$-ISS.
        \item The system $\Sigma(A,B)$ is finite-time $L^p$-$L^p$-admissible and $(T(t))_{t \ge 0}$ is exponentially stable.
        \item The system $\Sigma(A,B)$ is infinite-time $L^p$-$L^p$-admissible and $(T(t))_{t \ge 0}$ is exponentially stable.
    \end{enumerate}
\end{corollary}

\begin{proof}
    The equivalence of (i)-(iii) is a direct combination of Proposition~\ref{prop:p-q-iss-implies-exponential-stability} and Theorem~\ref{thm:infinite-admissibility-sufficient}.

    Next, if $p<\infty$, then $L^p$-$L^p$-ISS of $\Sigma(A,B)$ implies that $\Sigma(A,B)$ is infinite-time $L^p$-$L^p$-admissible and $(T(t))_{t \ge 0}$ is exponentially stable due to Proposition~\ref{prop:p-q-iss-implies-exponential-stability}. As the semigroup is analytic, we infer from Proposition~\ref{prop:p-implies-q} that $\Sigma(A,B)$ is infinite-time  $L^q$-$L^q$-admissible for all $q\in (1,\infty)$. Appealing once again to Proposition~\ref{prop:p-q-iss-implies-exponential-stability}, we obtain that the system must be $L^q$-$L^q$-ISS for all $q\in (1,\infty)$.
\end{proof}

The equivalences in Corollary~\ref{cor:p-p-iss-characterisation} no longer hold in general if $p\ne q$ as noted in Remark~\ref{rem:scalar-system-Counterex} below. Recall from Corollary~\ref{cor:hilbert-analytic-admissible} that if $X$ is a Hilbert space and $(T(t))_{t \ge 0}$ is analytic, then $\Sigma(A,B)$ is automatically finite-time $L^p$-$L^p$-admissible. Consequently, Corollary~\ref{cor:p-p-iss-characterisation} yields the following:

\begin{corollary}
    \label{cor:p-p-iss-hilbert}
    In Setting~\ref{setting:system}, if $X$ is a Hilbert space and $(T(t))_{t \ge 0}$ is analytic, then the following are equivalent for $p\in [1,\infty)$:
    \begin{enumerate}[\upshape (i)]
        \item The system $\Sigma(A,B)$ is $L^p$-$L^p$-ISS for some $p\in [1,\infty)$.
        \item The system $\Sigma(A,B)$ is $L^p$-$L^p$-ISS for all $p\in (1,\infty)$.
        \item The semigroup $(T(t))_{t \ge 0}$ is exponentially stable.
    \end{enumerate}
\end{corollary}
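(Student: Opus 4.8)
The plan is to prove the cyclic chain of implications $(i) \Rightarrow (iii) \Rightarrow (ii) \Rightarrow (i)$, each step being a direct quotation of one of the two preceding corollaries; no new analytic estimate is needed. The conceptual content is simply that, over a Hilbert space, an \emph{analytic} semigroup already comes with finite-time $L^q$-$L^q$-admissibility for every $q\in(1,\infty)$ (Corollary~\ref{cor:hilbert-analytic-admissible}(b)), so as soon as exponential stability is available one gets the full strength of $L^q$-$L^q$-ISS for all such $q$ via Corollary~\ref{cor:p-p-iss-characterization}.

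First I would dispose of $(ii) \Rightarrow (i)$, which is immediate: since the interval $(1,\infty)\subseteq[1,\infty)$ is non-empty, $L^q$-$L^q$-ISS for all $q\in(1,\infty)$ in particular gives $L^p$-$L^p$-ISS for some $p\in[1,\infty)$. Next, for $(i) \Rightarrow (iii)$, I would assume $\Sigma(A,B)$ is $L^p$-$L^p$-ISS for some $p\in[1,\infty)$; as $(T(t))_{t\ge 0}$ is analytic, the equivalence $(i)\Leftrightarrow(ii)$ of Corollary~\ref{cor:p-p-iss-characterization} applies and forces $(T(t))_{t\ge 0}$ to be exponentially stable. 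Note this step already absorbs the borderline exponent $p=1$: even $L^1$-$L^1$-ISS yields exponential stability, after which the conclusion will be bootstrapped to the whole range $(1,\infty)$.

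Finally, for $(iii) \Rightarrow (ii)$, I would assume $(T(t))_{t\ge 0}$ is exponentially stable; it is also analytic and acts on a Hilbert space, so Corollary~\ref{cor:hilbert-analytic-admissible}(b) gives that $\Sigma(A,B)$ is finite-time $L^q$-$L^q$-admissible for every $q\in(1,\infty)$. Combining this with exponential stability, the implication $(ii)\Rightarrow(i)$ of Corollary~\ref{cor:p-p-iss-characterization} yields that $\Sigma(A,B)$ is $L^q$-$L^q$-ISS for every $q\in(1,\infty)$, which is exactly $(ii)$. (Since an exponentially stable analytic semigroup is in particular bounded analytic, one could equally invoke part (a) of Corollary~\ref{cor:hilbert-analytic-admissible} to obtain infinite-time admissibility directly and then conclude via the $(iii)\Rightarrow(i)$ direction of Corollary~\ref{cor:p-p-iss-characterization}.)

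I do not expect a genuine obstacle here: the argument is pure bookkeeping layered on Corollaries~\ref{cor:p-p-iss-characterization} and~\ref{cor:hilbert-analytic-admissible}. The only point requiring a little care is the asymmetry between ``for some $p\in[1,\infty)$'' in $(i)$ and ``for all $p\in(1,\infty)$'' in $(ii)$ — the transition from a single (possibly endpoint) exponent to the entire open range $(1,\infty)$ is routed through exponential stability of the semigroup rather than through a direct $L^p$-to-$L^q$ admissibility transfer, which is why the Hilbert-space/analyticity hypotheses enter precisely in the step $(iii)\Rightarrow(ii)$.
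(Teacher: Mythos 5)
Your proof is correct and is exactly the argument the paper intends: the paper derives this corollary by combining Corollary~\ref{cor:hilbert-analytic-admissible} (automatic finite-time $L^p$-$L^p$-admissibility for analytic semigroups on Hilbert spaces) with the characterization in Corollary~\ref{cor:p-p-iss-characterization}, which is precisely your chain $(i)\Rightarrow(iii)\Rightarrow(ii)\Rightarrow(i)$. No gaps; your parenthetical alternative via part (a) and infinite-time admissibility is also valid.
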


\begin{figure}
    \begin{tikzpicture}[admnode]
    \coordinate (A) at (-3.2,0);
    \coordinate (B) at (0,0);
    \coordinate (C) at (4.8,0);
    \coordinate (D) at (4.8,-2.4);
    
    \node (LpISS) at (A) {$L^p$-ISS};
    \node (LpLqISS) at (B) {$L^p$-$L^q$-ISS};
    
    \node (InfLpLq) [text centered,text width=10cm] at (C)
    {infinite-time $L^p$-$L^q$-admissibility\\ +\\ exponential\ stability};
    
    \node (FinLpLq) [text centered,text width=10cm] at (D)
    {finite-time $L^p$-$L^q$-admissibility\\ +\\ exponential\ stability};
    
    \draw[ecvsectioncolor,impl,hyp]
    ($(A)+(0.8,0.2)$) -- ($(B)+(-1,0.2)$);
    
    \node[ecvsectioncolor] at ($($(A)+(0.8,0.2)$)!0.5!($(B)+(-1,0.2)$)+(0,0.35)$)
    {\small $q=\infty$};
    
    \draw[ecvsectioncolor,impl,hyp]
    ($(B)+(1,0.2)$) -- ($(B)+(2,0.2)$);
    
    \draw[implback]
    ($(B)+(1,-0.2)$) -- ($(B)+(2,-0.2)$);
    
    \node[ecvsectioncolor] at ($($(B)+(1,0.2)$)!0.5!($(B)+(2,0.2)$)+(0,0.35)$)
    {\small $q<\infty$};
    
    \draw[ecvsectioncolor,impleq,hyp] (InfLpLq) -- (FinLpLq) node[pos=0.5,left,xshift=10pt] {\small $p=q$} node[pos=0.5,right,xshift=-6pt] {\small analytic};
    
    \draw[counterexample,->,degil]
    ($(B)+(-1,-0.2)$) -- ($(A)+(0.8,-0.2)$);
    \end{tikzpicture}
    \caption{Relations between ISS notions for fixed $p,q\in[1,\infty]$. Solid arrows are unconditional; dotted arrows use the indicated additional assumptions; slashed arrows indicate failure in general.}
    \label{fig:ISS-Relations}
\end{figure}

\section{Lyapunov method}
    \label{sec:lyapunov}

Lyapunov functions are a cornerstone of stability analysis for dynamical and control systems. 
In this section, we develop Lyapunov theory for the analysis of ISS in integral norms.

For a real-valued function $b:\bbR_+\to\R$, recall that the \emph{right-hand upper Dini derivative} at $t\in\bbR_+$ is defined as
\[
    D^+b(t) := \overline{\lim_{h \downarrow 0}} \frac{b(t+h) - b(t)}{h}.
\]
In Setting~\ref{setting:system}, let $x \in X$ and let $V$ be a real-valued function defined in a neighbourhood of $x$. The \emph{Lie derivative 
of $V$ at $x$ corresponding to the input $u$ along the corresponding trajectory of $\Sigma(A,B)$} is defined by
\begin{equation}
\label{eq:Lie-Derivative}
    \dot{V}_u(x):=D^+ V\big(\phi(\cdot,x,u)\big)\Big|_{t=0}=\overline{\lim_{t \downarrow 0}} {\frac{1}{t}\big(V(\phi(t,x,u))-V(x)\big) }.    
\end{equation}
Note that this definition makes sense only if $\phi(\cdot,x,u)$ is defined and lies in $X$ for all sufficiently small times.

\begin{remark}
For the computation of the Dini derivative according to the formula \eqref{eq:Lie-Derivative}, we need to know the solution for a small enough period of time. This makes it possible to develop general theoretical results, but it can be tedious to compute Lie derivatives using this definition in practice. 
It is appealing to check the conditions for strong solutions, and then argue by density arguments. For that, one needs, however, some regularity of a function $V$ (at least Lipschitz continuity), and sufficient regularity of the flow in time (at least absolute continuity). Under these assumptions, one can use, for instance, gradient formulas for Lie derivatives for ordinary differential equations \cite[Section 2.2.3]{Mironchenko2023a} or Driver's derivative for delay systems \cite{Pep07}. For abstract systems that we consider, mild solutions are in general merely continuous even for $x_0 \in D(A)$ and smooth inputs. However, for analytic systems with initial condition in $x_0$, and inputs in $C^2(\R_+,U)$, mild solutions are indeed classical; see \cite[discussion prior to Theorem 3]{MiS25}. In this case, the Lie derivative can be computed using classical derivatives.
\end{remark}

\subsection{Linear systems with bounded control operators}

The following result gives extensive criteria for $L^p$-$L^q$-ISS, $p\le q$ for the case of bounded input operators. In particular, this provides a converse Lyapunov result for $L^p$-$L^q$-ISS systems.

\begin{definition}
In Setting~\ref{setting:system}, if $B$ is $\mathrm C$-admissible, then a continuous function $V:X\to \bbR_+$ is called a \emph{coercive $L^1$-ISS Lyapunov function} for $\Sigma(A,B)$ if there there exist $\psi_1,\psi_2 \in \Kinf$, and $a_3,a_4>0$
such that
\[
    \psi_1(\norm{x}) \le V(x) \leq \psi_2(\norm{x}),\qquad x\in X,
\]
and the Lie derivative of $V$ along the trajectories of $\Sigma(A,B)$ satisfies
\[
    \dot{V}_u(x) \leq -a_3V(x) + a_4\|u(0)\|_{U}
\]
for all $x \in X$ and $u\in\mathrm C(\bbR_+,U)$.    
\end{definition}

\begin{theorem}
    \label{thm:p-q-iss-bounded-case}
    In Setting~\ref{setting:system}, if $B\in \calL(U,X)$, then the following statements are equivalent.
    \begin{enumerate}[\upshape (i)]
        \item The system $\Sigma(A,B)$ is $L^p$-ISS for some/all $p \in [1,\infty)$.
        \item The semigroup $(T(t))_{t \ge 0}$ is exponentially stable.
        \item There exists $\lambda>0$ such that the function
            \[
                V(x) := \sup_{t\ge 0} e^{\lambda  t} \norm{T(t)x},\qquad x\in X,
            \]
        is a coercive $L^1$-ISS Lyapunov function for $\Sigma(A,B)$.
        \item The system $\Sigma(A,B)$ is $L^p$-$L^q$-ISS for some/all $p,q \in [1,\infty)$ with $p\le q$.
    \end{enumerate}
\end{theorem}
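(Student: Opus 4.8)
The plan is to organise the proof around condition~(ii) as a hub, establishing (i)$\,\Leftrightarrow\,$(ii), (ii)$\,\Leftrightarrow\,$(iv) and (ii)$\,\Leftrightarrow\,$(iii); the ``some/all'' quantifiers in (i) and (iv) are handled by always extracting exponential stability from the ``some $p$'' (resp.\ ``some $p\le q$'') hypothesis and then deriving the ``for all'' statements back from~(ii). Since $B\in\calL(U,X)$, the input operator $\Phi_t u=\int_0^t T(t-s)Bu(s)\,\rd s$ is an absolutely convergent Bochner integral with values in $X$, so $\Sigma(A,B)$ is $L^1$-admissible and hence $L^p$-admissible for every $p\in[1,\infty)$; the characterization of $L^p$-ISS for linear systems with $L^p$-admissible control operators (\cite[Theorem~2.1.21]{Mironchenko2023b}, cf.\ \cite{JacobNabiullinPartingtonSchwenninger2018}) then gives, for each such $p$, the equivalence of $L^p$-ISS with exponential stability of $(T(t))_{t\ge0}$, which is (i)$\,\Leftrightarrow\,$(ii). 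For (ii)$\,\Leftrightarrow\,$(iv): if (ii) holds, Proposition~\ref{prop:admissibility-bounded} yields infinite-time $L^p$-$L^q$-admissibility for all $p\le q$, and Proposition~\ref{prop:p-q-iss-implies-exponential-stability} upgrades this to $L^p$-$L^q$-ISS for all $p,q\in[1,\infty)$ with $p\le q$; conversely, $L^p$-$L^q$-ISS for some such pair returns (ii) by Proposition~\ref{prop:p-q-iss-implies-exponential-stability}.

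The substantive part is (ii)$\,\Rightarrow\,$(iii). Assuming $\norm{T(t)}\le Me^{-\omega t}$ for all $t\ge0$ with $M,\omega>0$, fix any $\lambda\in(0,\omega)$. Each map $x\mapsto e^{\lambda t}\norm{T(t)x}$ is Lipschitz with constant $e^{\lambda t}\norm{T(t)}\le Me^{(\lambda-\omega)t}\le M$, so $V(x)=\sup_{t\ge0}e^{\lambda t}\norm{T(t)x}$ is finite-valued, $M$-Lipschitz (hence continuous), and satisfies $\norm{x}=e^{\lambda\cdot0}\norm{T(0)x}\le V(x)\le M\norm{x}$, so $\psi_1(r):=r$, $\psi_2(r):=Mr$ serve as the coercivity bounds in $\Kinf$. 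For the Lie derivative, the semigroup law gives $V(T(h)x)=\sup_{t\ge0}e^{\lambda t}\norm{T(t+h)x}=e^{-\lambda h}\sup_{r\ge h}e^{\lambda r}\norm{T(r)x}\le e^{-\lambda h}V(x)$, while boundedness of $B$ yields $\phi(h,x,u)=T(h)x+\int_0^h T(h-s)Bu(s)\,\rd s$ with $\norm{\int_0^h T(h-s)Bu(s)\,\rd s}\le M\norm{B}\int_0^h e^{-\omega(h-s)}\norm{u(s)}_U\,\rd s$. Combining these with the $M$-Lipschitz bound for $V$ gives
\[
    V(\phi(h,x,u))-V(x)\ \le\ (e^{-\lambda h}-1)\,V(x)+M^2\norm{B}\int_0^h e^{-\omega(h-s)}\norm{u(s)}_U\,\rd s .
\]
Dividing by $h$, taking $\limsup_{h\downarrow0}$, and invoking continuity of $u$ at $0$ yields $\dot V_u(x)\le-\lambda V(x)+M^2\norm{B}\,\norm{u(0)}_U$, so $V$ is a coercive $L^1$-ISS Lyapunov function with $a_3=\lambda$, $a_4=M^2\norm{B}$.

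Finally, (iii)$\,\Rightarrow\,$(ii) is quick: by definition a coercive $L^1$-ISS Lyapunov function is real-valued, so $V(x)=\sup_{t\ge0}e^{\lambda t}\norm{T(t)x}<\infty$ for every $x\in X$; the uniform boundedness principle applied to $\{e^{\lambda t}T(t):t\ge0\}\subset\calL(X)$ then gives $\sup_{t\ge0}e^{\lambda t}\norm{T(t)}<\infty$, i.e.\ exponential stability. (Alternatively, set $u\equiv0$ in the Lyapunov inequality and use the standard Dini comparison lemma to obtain $\norm{T(t)x}\le\psi_1^{-1}(e^{-a_3 t}\psi_2(\norm{x}))$, whence $\norm{T(t)}\to0$.) The only genuinely delicate step is the Lie-derivative computation in (ii)$\,\Rightarrow\,$(iii): one must justify that the $\limsup$ of the displayed right-hand side splits off $-\lambda V(x)$, that the remaining convolution term tends to exactly $M^2\norm{B}\norm{u(0)}_U$ by continuity of $u$, and that $\phi(\cdot,x,u)$ stays in $X$ on a neighbourhood of $0$ so that $\dot V_u(x)$ is meaningful -- the latter being automatic because a bounded control operator is $\mathrm C$-admissible.
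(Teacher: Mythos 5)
Your proof is correct and follows the paper's overall architecture --- (i)$\,\Leftrightarrow\,$(ii) via $L^p$-admissibility of a bounded $B$ and \cite[Theorem~2.1.21]{Mironchenko2023b}, (ii)$\,\Leftrightarrow\,$(iv) via Propositions~\ref{prop:admissibility-bounded} and~\ref{prop:p-q-iss-implies-exponential-stability}, and the same candidate Lyapunov function $V(x)=\sup_{t\ge0}e^{\lambda t}\norm{T(t)x}$ for (ii)$\,\Rightarrow\,$(iii) --- but it is more self-contained at two points. First, for the dissipation inequality you carry out the Lie-derivative estimate by hand (the $M$-Lipschitz bound on $V$, the identity $V(T(h)x)\le e^{-\lambda h}V(x)$, and the averaging argument for the convolution term), where the paper simply invokes \cite[Proposition~7]{MironchenkoWirth2017}; your computation is a correct unpacking of that result, differing only in the harmless constant $M^2\norm{B}$ in place of $M\norm{B}$. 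Second, you close the cycle with a direct proof of (iii)$\,\Rightarrow\,$(ii): since (iii) asserts that this specific $V$ is a finite-valued continuous function, the uniform boundedness principle applied to $\{e^{\lambda t}T(t):t\ge 0\}$ immediately yields $\sup_{t\ge0}e^{\lambda t}\norm{T(t)}<\infty$, hence exponential stability. The paper instead routes through (iii)$\,\Rightarrow\,$(i) by citing the general direct Lyapunov theorem \cite[Theorem~1]{Mironchenko2020}. Your shortcut is more elementary but exploits the concrete form of $V$ prescribed in (iii), whereas the paper's route would still work if (iii) only asserted the existence of some coercive $L^1$-ISS Lyapunov function; both arguments are valid here.
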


\begin{proof}
    Since $B$ is bounded, $\Sigma(A,B)$ is $L^p$-admissible for each $p \in [1,\infty]$, so the equivalence of (i) and (ii) holds due to \cite[Theorem~2.1.21]{Mironchenko2023b}. Moreover, the implication  (iii) $\Rightarrow$ (i)  is known from \cite[Theorem~1]{Mironchenko2020}.

    ``(ii) $\Leftrightarrow$ (iv)'': Recall from Proposition~\ref{prop:admissibility-bounded} that if
     $(T(t))_{t \ge 0}$ is exponentially stable and $B$ is bounded, then $\Sigma(A,B)$ is infinite-time $L^p$-$L^q$-admissible
     for all $1\le p\le q\le \infty$.
    The equivalence is therefore a direct consequence of Proposition~\ref{prop:p-q-iss-implies-exponential-stability}.

    ``(ii) $\Rightarrow$ (iii)'': Let $x\in X$. By exponential stability,  there exists $M,\omega>0$ such that $\norm{T(t)}\le Me^{-\omega t}$ for all $t\ge 0$. Fixing $\lambda \in (0,\omega)$, we see that
    $
        V(x) = \sup_{t\ge 0} e^{\lambda  t} \norm{T(t)x}
    $
    satisfies $V(x)\ge \norm{x}$ and
    \[
        V(x) \le M\sup_{t\ge 0}e^{(\lambda -\omega)t} \norm{x} \le M \norm{x}.
    \]
    
    Now, since $B\in \calL(U,X)$, the system $\Sigma(A,B)$ is, in particular, $\mathrm C$-admissible. Further, because of exponential stability, \cite[Proposition~7]{MironchenkoWirth2017} guarantees for all $x\in X$ and for all $u\in\mathrm C(\bbR_+, U)$ that 
    \[
        \dot{V}_u(x) 
                     \le -\lambda V(x)+V(B(u(0))  
                     \le -\lambda  \norm{x} + M \norm{B} \norm{u(0)}_U.
    \]
    Therefore, $V$ is a coercive $L^1$-ISS Lyapunov function for $\Sigma(A,B)$.
\end{proof}

\begin{remark}
    \label{rem:scalar-system-Counterex}
    One of the conclusions of Theorem~\ref{thm:p-q-iss-bounded-case} is the fact that for bounded control operators, exponential stability of the semigroup implies $L^p$-$L^q$ ISS if $p\le q$.
    However, as Example~\ref{ex:Scalar-system-Counterex} illustrates (cf. Proposition~\ref{prop:p-q-iss-implies-exponential-stability}), even for scalar systems it does not guarantee $L^p$-$L^q$ ISS for $p>q$, in general.
\end{remark}

\subsection{Lyapunov functions for systems with unbounded control operators}

Next, we turn our attention to unbounded control operators. Unlike the bounded case, here $L^p$-ISS and $L^p$-$L^p$-ISS are no longer equivalent (Example~\ref{ex:diagonal}).

While the classical heat equation with Dirichlet boundary inputs is well-known to be ISS, the existence of coercive ISS Lyapunov functions for this system remains uncertain. Non-coercive Lyapunov functions, introduced in \cite{MironchenkoWirth2019}, expand the applicability of Lyapunov methods to a broader class of systems, and have been successfully applied to the heat equation with Dirichlet boundary inputs as demonstrated in \cite{JacobMironchenkoPartingtonWirth2020}. In the following, we show that non-coercive Lyapunov functions can also be effectively utilised to analyse $L^q$-$L^q $-ISS. Since our systems are linear, we will employ homogeneous Lyapunov functions, a tool widely used for the analysis of non-linear homogeneous systems \cite{Rosier1992}.

\begin{definition}
\label{def:LISS-LF-dissipative}
    In Setting~\ref{setting:system}, let $\Sigma(A,B)$ be $\mathrm C$-admissible. For $q\in [1,\infty)$,
    a continuous function $V:X \to \bbR_+$ is called a \emph{$q$-homogeneous non-coercive ISS Lyapunov function in a dissipative form} for $\Sigma(A,B)$, if $V(ax)=\modulus{a}^qV(x)$ for all $a \in\R$, and there exist
    $a_2, a_3,a_4>0$
    such that
    \begin{equation}
        \label{eq:LyapFunk_1Eig_LISS-dissipative}
        0 < V(x) \leq a_2 \|x\|^q, \qquad x \in X\setminus \{0\},
    \end{equation}
   and the Lie derivative of $V$ along the trajectories of $\Sigma$ satisfies for all $x \in X$ and $u\in\mathrm C(\bbR_+,U)$ the dissipation inequality
    \begin{equation}
        \label{DissipationIneq}
        \dot{V}_u(x) \leq -a_3\|x\|^q + a_4\|u(0)\|^q_{U}.
    \end{equation}
\end{definition}

The following result gives a criterion for $L^q$-$L^q$-ISS for the system $\Sigma(A,B)$ with unbounded control in terms of $q$-homogeneous  ISS Lyapunov functions. Afterwards, we give examples of systems which possess such Lyapunov functions.

\begin{theorem}
    \label{thm:lyapunov-iss-unbounded}
    In Setting~\ref{setting:system},
    let $B\in \calL(U, X_{-1})$ be $\mathrm C$-admissible and assume that the mild solution is continuous with respect to time in the $X$-norm for every $u\in \mathrm C(\bbR_+, U)$.
    
    For $q\in [1,\infty)$, if there exists a non-coercive $q$-homogeneous ISS Lyapunov
    function in a dissipative form for $\Sigma(A,B)$, then $\Sigma(A,B)$ is $L^q$-$L^q$-ISS.
\end{theorem}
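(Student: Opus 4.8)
The plan is to integrate the dissipation inequality~\eqref{DissipationIneq} along trajectories to produce the $L^q$-$L^q$-ISS estimate, following the standard Lyapunov-to-ISS route but being careful about the non-coercivity of $V$ and the regularity hypotheses. First I would fix a continuous input $u\in\mathrm C(\bbR_+,U)$ and an initial state $x\in X$; by the $\mathrm C$-admissibility of $B$ together with the assumed continuity of the mild solution in the $X$-norm, the map $t\mapsto\phi(t,x,u)$ is a continuous $X$-valued function, so $t\mapsto V(\phi(t,x,u))=:v(t)$ is continuous on $\bbR_+$ and the Lie derivative $\dot V_u$ is well-defined along the trajectory. The cocycle (semigroup) property of $\phi$ gives $D^+v(t)=\dot V_{u(t+\cdot)}(\phi(t,x,u))\le -a_3\|\phi(t,x,u)\|^q+a_4\|u(t)\|_U^q$ for every $t\ge 0$, where I have used that the relevant input seen at time $t$ is the shifted input and that~\eqref{DissipationIneq} holds for all states and all continuous inputs.

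Next I would invoke a comparison/integration lemma for functions with an integrable right-hand Dini derivative bound: since $v$ is continuous and $D^+v(t)\le g(t):=-a_3\|\phi(t,x,u)\|^q+a_4\|u(t)\|_U^q$ with $g$ locally integrable, one has $v(t)-v(0)\le\int_0^t g(s)\,ds$ for all $t\ge 0$. Rearranging,
\[
    a_3\int_0^t\|\phi(s,x,u)\|^q\,ds \le v(0)-v(t)+a_4\int_0^t\|u(s)\|_U^q\,ds \le V(x)+a_4\|u\|_{L^q([0,t],U)}^q,
\]
where I dropped the nonnegative term $v(t)=V(\phi(t,x,u))\ge 0$. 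Using the upper bound $V(x)\le a_2\|x\|^q$ from~\eqref{eq:LyapFunk_1Eig_LISS-dissipative} and taking $q$-th roots with the elementary inequality $(\alpha+\beta)^{1/q}\le\alpha^{1/q}+\beta^{1/q}$ for $\alpha,\beta\ge 0$, I obtain
\[
    \|\phi(\argument,x,u)\|_{L^q([0,t],X)} \le \left(\tfrac{a_2}{a_3}\right)^{1/q}\|x\| + \left(\tfrac{a_4}{a_3}\right)^{1/q}\|u\|_{L^q([0,t],U)},
\]
which is exactly an estimate of the form in Proposition~\ref{prop:p-q-iss-implies-exponential-stability}(iii) with $p=q$, hence (by that proposition, or directly with $\sigma(r)=(a_2/a_3)^{1/q}r$ and $\gamma(r)=(a_4/a_3)^{1/q}r$ in $\Kinf$) yields $L^q$-$L^q$-ISS — provided we also know $L^q$-$L^q$-admissibility.

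The remaining point, and the one I expect to be the main obstacle, is to upgrade the a priori estimate (which I first derived only for continuous inputs $u\in\mathrm C(\bbR_+,U)$) to arbitrary $u\in L^q([0,t],U)$ and to establish the $L^q$-$L^q$-admissibility that is part of the definition of $L^q$-$L^q$-ISS. The natural route is a density argument: $\mathrm C_c(\bbR_+,U)$ (or simple functions) is dense in $L^q([0,t],U)$ for $q<\infty$, the input map $u\mapsto\Phi_{(\argument)}u$ is continuous from $L^q$ into $L^q([0,t],X_{-1})$, and the uniform bound $\|\phi(\argument,0,u)\|_{L^q([0,t],X)}\le(a_4/a_3)^{1/q}\|u\|_{L^q([0,t],U)}$ on the dense subset shows that $\Phi_{(\argument)}u$ in fact takes values in $X$ with the stated bound for all $u\in L^q$; this gives $L^q$-$L^q$-admissibility via Proposition~\ref{prop:admissibility-criterion} and simultaneously extends the ISS estimate to all admissible inputs (the term $T(t)x$ is handled separately and is already controlled by exponential stability, which follows from setting $u=0$ and Datko--Pazy as in the proof of Proposition~\ref{prop:p-q-iss-implies-exponential-stability}, or simply absorbed into $\sigma$). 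One must check that the closure is taken in the right topology so that pointwise-in-$X_{-1}$ limits identify with the $L^q(X)$-limits; this is where the hypothesis that the mild solution is $X$-continuous for continuous inputs does the essential work, letting us run the computation legitimately on the dense set before passing to the limit.
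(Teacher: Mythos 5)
Your proposal is correct and follows essentially the same route as the paper's proof: integrate the dissipation inequality along the trajectory for continuous inputs, drop the nonnegative term $V(\phi(t,x,u))$, use the upper bound $V(x)\le a_2\|x\|^q$ and subadditivity of $r\mapsto r^{1/q}$, and then extend to all of $L^q$ by density. Your added care about the Dini-derivative comparison lemma and the admissibility part of the definition only makes explicit what the paper leaves implicit.
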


\begin{proof} 
    Let $x\in X$ and $u\in \mathrm C(\bbR_+, U)$. Note that~\eqref{DissipationIneq} is an \q{instantaneous} estimate. By $\mathrm C$-admissibility, for a continuous input, the corresponding trajectory exists for all times and lies in $X$, thus \eqref{DissipationIneq}, applied along the trajectory, reads as:
    \begin{equation}
    \label{DissipationIneq_nc-specified}
    \dot{V}_{u(t+\cdot)}\big(\phi(t,x,u)\big) \leq
     -a_3\|\phi(t,x,u)\|^q + a_4\|u(t)\|^q_{U} \qquad (t>0).
    \end{equation}    
    Integrating \eqref{DissipationIneq_nc-specified} with respect to time, we obtain that 
    \[
        V\big(\phi(t,x,u)\big) - V(x) \leq -a_3\int_0^t \norm{\phi(s,x,u)}^q ds + a_4\int_0^t \norm{u(s)}^q_{U}ds.
    \]
   Consequently, from the positivity of the Lyapunov function, we get
    \[
        \int_0^t \norm{\phi(s,x,u)}^q \ ds \le \frac{1}{a_3}V(x) + \frac{a_4}{a_3} \int_0^t \norm{u(s)}^q_U\ ds.
    \]
    Therefore,~\eqref{eq:LyapFunk_1Eig_LISS-dissipative} and the inequality $(a+b)^{1/q} \leq (2a)^{1/q} + (2b)^{1/q}$ together imply that
    \[
        \norm{\phi(\argument,x,u)}_{L^q([0,t], X)}
        \le \left(\frac{2a_2}{a_3}\right)^{1/q}\norm{x} + \left(\frac{2a_4}{a_3}\right)^{1/q}\norm{u}_{L^q([0,t],U)}.
    \]
    Finally, by the density of continuous functions in $L^q$ and continuity of mild solutions, the above estimate extends to each $u\in L^q_{\loc}(\bbR_+, U)$.
    In other words, $\Sigma(A,B)$ is $L^q$-$L^q$-ISS.
\end{proof}

\subsection{Construction of Lyapunov functions}

Having derived direct Lyapunov results, we proceed to the construction of homogeneous  Lyapunov functions. 

\subsubsection*{Lyapunov functions on Hilbert spaces}
Let us start with good news: some converse results available for classical ISS can be easily tailored to obtain Lyapunov functions for $L^p$-$L^p$-ISS.
The following is an immediate corollary of \cite[Theorem 5.3]{JacobMironchenkoPartingtonWirth2020}:

\begin{theorem}
\label{thm:Gen_ISS_LF_Construction}
In Setting~\ref{setting:system}, let $X$ be a complex Hilbert space. 
In addition, let $P\in \calL(X)$ be an operator fulfilling the following conditions:
\begin{enumerate}[\upshape (a)]
    \item For each $x\in X$ with $x\ne 0$, we have
    \[
        \re \duality{Px}{x}_X >0.
    \]

    \item The operator $P$ maps into $\dom{A^*}$, i.e., $\Ima P \subseteq \dom{A^*}$. Here $A^*$ denotes the Hilbert space adjoint of $A$.

    \item The operator $PA$ extends to a bounded linear operator on $X$, which we again denote by $PA$, i.e., $PA\in \calL(X)$.

    \item The operator $P$ satisfies the Lyapunov inequality
    \[
        \re \duality{(PA+A^*P)x  }{x} \le -\duality{x}{x}, 
    \]
    for all $x\in \dom A$.
\end{enumerate}

If $B$ is $L^\infty$-admissible, then the function
\[
    V(x):= \re \duality{Px}{x}_X\qquad,\quad x\in X,
\]
is a $2$-homogeneous non-coercive ISS Lyapunov function in dissipative form for $\Sigma(A,B)$.
\end{theorem}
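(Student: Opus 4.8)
The plan is to verify the three defining properties of a $2$-homogeneous non-coercive ISS Lyapunov function in a dissipative form for $V(x) = \re\duality{Px}{x}_X$, citing \cite[Theorem~5.3]{JacobMironchenkoPartingtonWirth2020} for the dissipation inequality and supplying the elementary checks directly. First I would observe that $V$ is clearly $2$-homogeneous: for $a\in\R$, $V(ax) = \re\duality{P(ax)}{ax} = a^2\re\duality{Px}{x} = \modulus{a}^2 V(x)$, using bilinearity (sesquilinearity with a real scalar). The positivity bound \eqref{eq:LyapFunk_1Eig_LISS-dissipative} then follows from condition (a), which gives $V(x) > 0$ for $x\neq 0$, together with the Cauchy--Schwarz estimate $V(x) = \re\duality{Px}{x} \le \norm{Px}\,\norm{x} \le \norm{P}_{\calL(X)}\norm{x}^2$, so one may take $a_2 := \norm{P}_{\calL(X)}$.

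The substantive part is the dissipation inequality \eqref{DissipationIneq}, and here I would invoke \cite[Theorem~5.3]{JacobMironchenkoPartingtonWirth2020} directly: conditions (a)--(d) are precisely the hypotheses of that theorem (positivity of the quadratic form, $\Ima P\subseteq\dom A^*$, boundedness of the extension $PA$, and the operator Lyapunov inequality with right-hand side $-\id$), and $B$ being $L^\infty$-admissible is the admissibility hypothesis required there. That theorem concludes that $V$ is a non-coercive ISS Lyapunov function in the dissipative form, i.e.\ that there are constants $a_3,a_4>0$ with $\dot V_u(x)\le -a_3\norm{x}^2 + a_4\norm{u(0)}_U^2$ for all $x\in X$ and $u\in\mathrm C(\bbR_+,U)$; combined with the homogeneity and the two-sided bound just established (the lower bound being only $0 < V(x)$, i.e.\ non-coercive, which is all that is required), this yields exactly Definition~\ref{def:LISS-LF-dissipative} with $q=2$. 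One small point to spell out is why $\dot V_u(x)$ is well-defined: $L^\infty$-admissibility of $B$ guarantees (by the $\mathrm C$-admissibility implied by it, cf.\ the discussion around Example~\ref{ex:LpLinf-adm_but_notLpC-adm}, or more safely by the continuity-of-trajectories hypothesis needed for the Lie derivative to make sense) that $\phi(\cdot,x,u)$ lies in $X$ for small times when $u$ is continuous.

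I expect the main obstacle to be purely bookkeeping rather than mathematical: matching the hypotheses (a)--(d) and the $L^\infty$-admissibility assumption here against the exact statement of \cite[Theorem~5.3]{JacobMironchenkoPartingtonWirth2020}, and checking that the form of the dissipation estimate produced there (which may be phrased with $\norm{u(0)}_U$ rather than $\norm{u(0)}_U^q$, or with a $\mathcal K_\infty$-gain rather than a linear one) can be massaged into the $q$-homogeneous dissipative form of Definition~\ref{def:LISS-LF-dissipative} — for a linear system and a $2$-homogeneous $V$ the natural gain is automatically of the form $a_4\norm{u(0)}_U^2$ by a rescaling argument, but this should be noted. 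If the cited theorem already delivers the statement in the required form, the proof reduces to the one-line homogeneity check, the Cauchy--Schwarz upper bound, and the citation.
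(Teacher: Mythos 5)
Your overall route is the same as the paper's: check $2$-homogeneity, get the upper bound $V(x)\le \norm{P}\,\norm{x}^2$ from Cauchy--Schwarz, positivity from (a), and import the dissipation inequality from \cite[Theorem~5.3]{JacobMironchenkoPartingtonWirth2020}. Those parts are correct.

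There is, however, one concrete step you gloss over, and it happens to be the only non-trivial content of the paper's proof beyond the citation. The cited theorem does \emph{not} deliver the estimate $\dot V_u(x)\le -a_3\norm{x}^2+a_4\norm{u(0)}_U^2$; it delivers
\[
    \dot V_u(x)\le -a_1\norm{x}^2+a_2\norm{u}_{L^\infty(\bbR_+,U)}^2 ,
\]
i.e.\ the gain is in the \emph{global} sup-norm of the input, not its value at time $0$. Your proposed fix --- a rescaling argument exploiting linearity and $2$-homogeneity --- addresses a different mismatch (a $\calK_\infty$-gain versus a quadratic one) and does nothing to replace $\norm{u}_{L^\infty(\bbR_+,U)}$ by $\norm{u(0)}_U$. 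The correct argument is a localization in time: since the upper Dini derivative at $t=0$ only depends on $\phi(t,x,u)$ for arbitrarily small $t$, one has $\dot V_u(x)=\dot V_v(x)$ whenever $v=u$ a.e.\ on $[0,r]$ for some $r>0$; applying the cited estimate to such a $v$ that vanishes after time $r$ gives $\dot V_u(x)\le -a_1\norm{x}^2+a_2\esssup_{s\in[0,r]}\norm{u(s)}_U^2$, and letting $r\downarrow 0$ for \emph{continuous} $u$ yields the required bound with $\norm{u(0)}_U^2$. Since Definition~\ref{def:LISS-LF-dissipative} demands the dissipation inequality precisely in terms of $\norm{u(0)}_U^q$, this step cannot be omitted. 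With it inserted, your proof matches the paper's.
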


\begin{proof}
Clearly, $V$ is $2$-homogeneous and satisfies the non-coercive sandwich bounds in~\eqref{eq:LyapFunk_1Eig_LISS-dissipative}. 
Furthermore, \cite[Theorem 5.3]{JacobMironchenkoPartingtonWirth2020}
 guarantees that there are $a_1,a_2>0$ such that for all $x\in X$ and all $u \in L^\infty(\bbR_+,\bbR)$ we have
    \[
    \dot{V}_u(x) \leq -a_1\|x\|^2 + a_2\|u\|^2_{L^\infty(\bbR_+,\bbR)}.
    \]
    However, since for any $u \in L^\infty(\bbR_+,\bbR)$, we have  $\dot{V}_u(x) = \dot{V}_v(x)$ for all $v \in L^\infty(\bbR_+,\bbR)$ with $v=u$ a.e. on $[0,r]$ for some arbitrarily small $r>0$, we obtain for any $r>0$:
    \[
    \dot{V}_u(x) \leq -a_1\|x\|^2 + a_2\esssup_{s\in[0,r]}|u(s)|^2.
    \]
    Now taking the limit $r\to 0$ and restricting ourselves to continuous functions only, we obtain for all $x\in X$ and all $u \in\mathrm C(\bbR_+,\bbR)$ that
    \[
    \dot{V}_u(x) \leq -a_1\|x\|^2 + a_2|u(0)|^2,
    \]
    which shows that $V$ is indeed a non-coercive $2$-homogeneous ISS Lyapunov function in a dissipative form for $\Sigma(A,B)$ in the sense of this paper.
\end{proof}

\begin{example}[Diagonal systems]
    \label{ex:diagonal}
    On the Hilbert space $X:= \ell^2(\bbN)$, defined in \eqref{eq:ell2}, we consider the semigroup $(T(t))_{t \ge 0}$ generated by
    \[
        Ae_n = -\lambda_n e_n \quad \text{with}\quad \dom{A} := \left\{ (x_n)\in X: (\lambda_n x_n) \in X\right\},
    \]
    where $(e_n)$ denotes the canonical orthonormal basis of $X$ and
    $(\lambda_n)$ is a strictly increasing sequence in $(0,\infty)$ that diverges to $\infty$. For $B\in \calL(\bbR, X_{-1})$, the function
    \[
        V(x) := \sum_{n\in \bbN} \frac{1}{\lambda_n} \duality{x}{e_n}^2,\quad x \in X,
    \]
    where $\duality{\argument}{\argument}$ is the usual inner product on $X$, is a non-coercive $2$-homogeneous ISS Lyapunov function in a dissipative form for $\Sigma(A,B)$ and $\Sigma(A,B)$ is $L^p$-$L^p$-ISS for all $p\in (1,\infty)$.
    
    Indeed, $V(ax)=a^2V(x)$ for all $a\in \bbR$ and from \cite[Proposition~4.6.9]{Mironchenko2023b}, it follows that the assumptions of Theorem~\ref{thm:Gen_ISS_LF_Construction} are fulfilled, whence
    $V$ is a non-coercive $2$-homogeneous ISS Lyapunov function in a dissipative form for $\Sigma(A,B)$.

    Furthermore, as $A$ generates a bounded analytic semigroup
    of contractions on a Hilbert space (cf. \cite[Proposition~6.9]{JacobMironchenkoPartingtonWirth2020}), so each $B\in \calL(\bbR, X_{-1})$ is even $L^\infty$-admissible and we have continuity of the mild solution; see \cite[Theorem~1]{JacobSchwenningerZwart2019}. Thus, $\Sigma(A,B)$ is $L^2$-$L^2$-ISS by Theorem~\ref{thm:lyapunov-iss-unbounded}. Due to analyticity of the semigroup, we can apply Propositions~\ref{prop:p-implies-q} and~\ref{prop:p-q-iss-implies-exponential-stability}, to even get $L^p$-$L^p$-ISS for all $p\in (1,\infty)$.
\end{example}

We point out that in Example~\ref{ex:diagonal} while $\Sigma(A,B)$ is $L^p$-$L^p$-ISS for each $B\in \calL(\bbR, X_{-1})$ and all $p\in (1,\infty)$, there exist $B\in \calL(\bbR, X_{-1})$ that are not $L^p$-admissible for any $p<\infty$ \cite[Example~5.2]{JacobNabiullinPartingtonSchwenninger2018}, and in turn, the corresponding system $\Sigma(A,B)$ cannot be $L^p$-ISS.

\begin{example}[Heat equation with Dirichlet boundary input]
    Consider the boundary control system
    \[
        \begin{aligned}
            x_t(\xi,t)& =a x_{\xi \xi}(\xi,t), \qquad && \xi\in(0,1),~ t>0,\\
            x(0,t)&=0,  \quad x(1,t)=u(t), \qquad && t>0,\\
            x(\xi,0)&=x_{0}(\xi)            \qquad && \xi \in (0,1),
        \end{aligned}
    \]
    where $a>0$. Choosing $X=L^{2}(0,1)$ and $U=\mathbb C$, the above system can equivalently be written as a system $\Sigma(A,B)$ with 
    \begin{align*}
        Af = a f''
        \quad\text{with}\quad
        \dom{A} = \{x \in H^2(0,1) : x(0) = x(1)=0\},
    \end{align*}
    the Dirichlet Laplacian and a certain $B\in \calL(U, X_{-1})$; see, for instance \cite[Proposition~10.1.2 and Remark~10.1.4]{TucsnakWeiss2009}. Observe that the function 
    \[
         V(x) := -  \langle A^{-1}x,x\rangle_X 
                 = \int_0^1 \left(\int_\xi^1  (\xi -\tau) x(\tau) d\tau \right)\overline{x(\xi)}\ d\xi,
    \]
    of course, satisfies, $V(ax)=\modulus{a}^2 V(x)$ for all $a\in \bbR$.
    It follows from \cite[Example 6.10]{JacobMironchenkoPartingtonWirth2020}, that
    the assumptions of Theorem~\ref{thm:Gen_ISS_LF_Construction} are fulfilled, and in turn,
    $V$ is a non-coercive $2$-homogeneous ISS Lyapunov function in a dissipative form for $\Sigma(A,B)$. 
    
    It is well-known that the Dirichlet Laplacian generates an exponentially stable analytic contraction semigroup on $X$. Repeating the arguments at the end of Example~\ref{ex:diagonal} and applying Theorem~\ref{thm:infinite-admissibility-sufficient}, it follows that $\Sigma(A,B)$ is $L^p$-$L^p$-ISS for all $p\in (1,\infty)$.
\end{example}

\subsubsection*{Lyapunov functions corresponding to analytic systems}

We close our analysis by constructing a Lyapunov function for analytic systems over Banach spaces. Recall that for the system $\Sigma(A,B)$ in Setting~\ref{setting:system}, the control operator $B$ a priori maps into $X_{-1}$. For what follows, we assume, in addition, that 
$(T(t))_{t\ge 0}$ is analytic and exponentially stable. Then the fractional Sobolev spaces $X_{-\alpha}$ for $\alpha \in (0,1)$ can be used to obtain a criterion for admissibility of $\Sigma(A,B)$, see for instance, \cite{MaraghBounitFadiliHammouri2014}. Recall that if $(T(t))_{t\ge 0}$ is exponentially stable and analytic, then $(T(t))_{t\ge 0}$ is bounded analytic and hence $A$ is sectorial \cite[Theorem~II.4.6]{EngelNagel2000} and for $\alpha>0$, the fractional powers $A^{-\alpha}:X \to X$ are bounded injective operators defined by
\[
    A^{-\alpha} := \frac{1}{2\pi i}\int_{\gamma} \lambda^{-\alpha} \Res(\lambda, A)\ d\lambda,
\]
where $\gamma$ is the boundary of a sector (arising from analyticity of the semigroup) that contains $\spec(A)$; see \cite[Section~II.5.c]{EngelNagel2000}. 
Injectivity allows us to define $A^{\alpha}:= \left(A^{-\alpha}\right)^{-1}$ with $\dom{A^{\alpha}} = \Ima A^{-\alpha}$. In particular, $ A^{-\alpha}A^{\alpha}$ acts as the identity operator on $\Ima A^{-\alpha}$. Analogous to the integer case, the \emph{fractional extrapolation space} -- also known as \emph{abstract Sobolev space} -- $X_{-\alpha}$ is defined as the completion of $X$ with respect to the norm $\norm{x}_{-\alpha}:=\norm{A^{-\alpha}x}_X$. This gives rise to the natural chain of continuous inclusions:
\[
    X \subset X_{-\alpha} \subset X_{-1}
\]
whenever $\alpha \in (0,1)$. Additionally, $A^{-\alpha}$ extends uniquely to an isometric isomorphism from $X_{-\alpha}$ to $X$, which we again denote by $A^{-\alpha}$. Various analyticity properties of the generator carry over to fractional powers, for example, $\Ima T(t)\subseteq \dom{A^\alpha}$ for all $t>0$ and
\begin{equation}
    \label{eq:fractional-analytic}
    \sup_{t>0} t^{\alpha } e^{\omega t} \norm{T(t)A^{\alpha} } <\infty \quad \text{for some }\omega>0.
\end{equation}
For a more detailed discussion about fractional powers and abstract Sobolev spaces, we refer to \cite{Haase2006} and \cite[Section~II.5.c]{EngelNagel2000}.

The proof of our converse Lyapunov result requires some technical considerations, which we outsource to the following lemma:
\begin{lemma}
    \label{lem:converse-lyapunov}
    In Setting~\ref{setting:system}, assume that $(T(t))_{t\ge 0}$ is exponentially stable and analytic and $B\in \calL(U, X_{-\alpha})$ for some $\alpha \in [0,1)$. Then $B$ is $\mathrm C$-admissible, and the mild solution given by~\eqref{eq:flow} is continuous with values in $X$. Additionally, there exist $C,\omega>0$ such that
    \begin{equation}
        \label{eq:lem:upper-estimate}
            \norm{T(t)\Phi_hu} \le  C t^{-\alpha } e^{-\omega  t}\norm{ A^{-\alpha}\Phi_h u}
    \end{equation}
    for all $t> 0$, $h\in (0,1)$, and $u\in \mathrm C(\bbR_+, U)$.
    In particular,
    \begin{equation}
        \label{eq:lem:limit-semigroup}
        \overline{\lim_{h \downarrow 0}}\frac{\norm{T(t)\Phi_hu}}{h} \le C\norm{A^{-\alpha}Bu(0)}t^{-\alpha}e^{-\omega t}
    \end{equation}
    for all $u\in \mathrm C(\bbR_+, U)$ and $t>0$.
\end{lemma}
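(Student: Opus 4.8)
The plan is to transport the problem from the extrapolation space $X_{-\alpha}$ down to the base space $X$ by means of the bounded operator $A^{-\alpha}B\in\calL(U,X)$. This operator is available because $A^{-\alpha}$ extends to an isometric isomorphism of $X_{-\alpha}$ onto $X$ that intertwines the (extrapolated) semigroup with $(T(t))_{t\ge0}$, and $B\in\calL(U,X_{-\alpha})$. The second ingredient is the analytic smoothing bound: since the semigroup is exponentially stable and analytic, there are $C,\omega>0$ (with also $\norm{T(t)}\le Me^{-\omega t}$) such that $\norm{A^{\alpha}T(t)}_{\calL(X)}\le Ct^{-\alpha}e^{-\omega t}$ for $t>0$; this is just~\eqref{eq:fractional-analytic} rewritten, using that $\Ima T(t)\subseteq\dom{A^{\alpha}}$ and $A^{\alpha}T(t)=T(t)A^{\alpha}$ is bounded on $X$. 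I fix such $C,\omega$ once and for all.

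For the first assertion I would argue as follows. Writing $\Phi_{\tau}u=\int_0^{\tau}T_{-1}(\tau-s)Bu(s)\,ds$ and noting that on $X_{-\alpha}$ the extrapolated semigroup maps into $X$ for positive times (as $\dom{A_{-\alpha}}=X_{1-\alpha}\subseteq X$ since $\alpha<1$), the intertwining identity together with the smoothing bound gives $\norm{T_{-1}(r)y}_X=\norm{A^{\alpha}T(r)A^{-\alpha}y}_X\le Cr^{-\alpha}e^{-\omega r}\norm{y}_{-\alpha}$ for $y\in X_{-\alpha}$, $r>0$. Since $r\mapsto r^{-\alpha}$ is locally integrable, for $u\in\mathrm C([0,t],U)$ the Bochner integral defining $\Phi_{\tau}u$ converges in $X$ and $\norm{\Phi_{\tau}u}_X\le C'\norm{B}\norm{u}_{\infty}\tau^{1-\alpha}$, which yields continuity at $\tau=0$. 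At an interior point $\tau_0$ I would split $\Phi_{\tau}u-\Phi_{\tau_0}u$ (for $\tau>\tau_0$) into the integral over $[\tau_0,\tau]$, whose norm is $O((\tau-\tau_0)^{1-\alpha})$, and the integral over $[0,\tau_0]$ of $(T(\tau-\tau_0)-I)T_{-1}(\tau_0-s)Bu(s)$, which tends to $0$ by dominated convergence with the integrable majorant $C(M+1)\norm{B}\norm{u}_{\infty}(\tau_0-s)^{-\alpha}e^{-\omega(\tau_0-s)}$; the case $\tau<\tau_0$ is analogous. Hence $\tau\mapsto\Phi_{\tau}u$ is continuous into $X$, and since $\tau\mapsto T(\tau)x_0$ is continuous, so is the mild solution~\eqref{eq:flow}.

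The estimate~\eqref{eq:lem:upper-estimate} is then almost immediate: for $h\in(0,1)$ and $u\in\mathrm C(\bbR_+,U)$ we have $\Phi_hu\in X$, hence $A^{-\alpha}\Phi_hu\in\dom{A^{\alpha}}$ and $\Phi_hu=A^{\alpha}(A^{-\alpha}\Phi_hu)$; commuting $A^{\alpha}$ with the semigroup gives $T(t)\Phi_hu=A^{\alpha}T(t)(A^{-\alpha}\Phi_hu)$, so that
\[
\norm{T(t)\Phi_hu}\le\norm{A^{\alpha}T(t)}_{\calL(X)}\norm{A^{-\alpha}\Phi_hu}\le Ct^{-\alpha}e^{-\omega t}\norm{A^{-\alpha}\Phi_hu}.
\]
For~\eqref{eq:lem:limit-semigroup} I would divide this by $h$ and pass to $\overline{\lim_{h\downarrow0}}$, so that it only remains to show $\lim_{h\downarrow0}h^{-1}\norm{A^{-\alpha}\Phi_hu}=\norm{A^{-\alpha}Bu(0)}$. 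Pulling the bounded operator $A^{-\alpha}$ through the Bochner integral and using the intertwining identity once more yields $A^{-\alpha}\Phi_hu=\int_0^hT(h-s)(A^{-\alpha}B)u(s)\,ds$, i.e. the classical mild-solution integral associated with the bounded control operator $A^{-\alpha}B$; the standard bound $\norm{h^{-1}\int_0^hT(h-s)g(s)\,ds-g(0)}\le\sup_{0\le s\le h}\norm{T(h-s)g(s)-g(0)}\to0$ for continuous $g$, applied to $g=(A^{-\alpha}B)u(\cdot)$, gives the claim.

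The only genuinely delicate part is the bookkeeping with the fractional extrapolation scale in the first two steps: justifying that the semigroup extrapolates to an analytic semigroup on $X_{-\alpha}$, that $A^{-\alpha}$ intertwines it with $(T(t))_{t\ge0}$, that $\dom{A_{-\alpha}}=X_{1-\alpha}\subseteq X$, and that $\norm{A^{\alpha}T(t)}_{\calL(X)}$ is precisely the quantity controlled by~\eqref{eq:fractional-analytic}. Once these facts are in hand — they are standard, see the references cited before the lemma — everything reduces to dominated convergence and the local integrability of $r^{-\alpha}$ provided by $\alpha<1$.
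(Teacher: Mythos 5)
Your proof is correct, and for the two estimates \eqref{eq:lem:upper-estimate} and \eqref{eq:lem:limit-semigroup} it follows exactly the paper's argument: factor $T(t)\Phi_hu=A^{\alpha}T(t)\bigl(A^{-\alpha}\Phi_hu\bigr)$, invoke the analytic smoothing bound \eqref{eq:fractional-analytic}, and reduce the limit $h^{-1}A^{-\alpha}\Phi_hu\to A^{-\alpha}Bu(0)$ to the standard statement for the bounded control operator $A^{-\alpha}B\in\calL(U,X)$. The only divergence is in the first claim: the paper obtains $\mathrm C$-admissibility and continuity of the mild solution by citing \cite[Proposition~2.13(ii)]{Schwenninger2020} (which gives $L^p$-admissibility for $p>(1-\alpha)^{-1}$) together with \cite[Proposition~2.3]{Weiss1989b}, whereas you prove it directly from the pointwise bound $\norm{T_{-1}(r)y}_X\le Cr^{-\alpha}e^{-\omega r}\norm{y}_{-\alpha}$, local integrability of $r^{-\alpha}$, and dominated convergence. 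Your self-contained route is sound and arguably more transparent, at the cost of the bookkeeping on the fractional scale that you correctly flag; it yields the same conclusion.
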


\begin{proof}
    First of all, for exponentially stable analytic semigroups, the assumption $B\in \calL(U, X_{-\alpha})$ implies by \cite[Proposition~2.13(ii)]{Schwenninger2020} that $B$ is $L^p$-admissible for all $p>(1-\alpha)^{-1}$. In particular, $B$ is $\mathrm C$-admissible, and by \cite[Proposition~2.3]{Weiss1989b} the mild solution given by~\eqref{eq:flow} is continuous with values in $X$. 
    
    Next, from the boundedness of $A^{-\alpha}$ and the fact that it commutes with the semigroup operators, for each $h>0$, we have
    \[
        A^{-\alpha}\Phi_h u= \int_0^h T(h-s)A^{-\alpha}Bu(s)\ ds.
    \]
    Whence, since $A^{-\alpha }B\in \calL(U,X)$, we have
    \begin{equation}
        \label{eq:lem:limit-generator}
        \lim_{h\to 0} \frac{A^{-\alpha}\Phi_h u}{h} = A^{-\alpha }Bu(0).
    \end{equation}
    On the other hand, due to~\eqref{eq:fractional-analytic}, we can find $C,\omega>0$ such that for each $t,h>0$
    \begin{align*}
        \norm{T(t)\Phi_hu}  = \norm{T(t)A^\alpha A^{-\alpha}\Phi_h u}
                              \le \norm{T(t)A^\alpha} \norm{ A^{-\alpha}\Phi_h u}
                              \le Ct^{-\alpha } e^{-\omega  t}\norm{ A^{-\alpha}\Phi_h u}.
    \end{align*}
    The estimate~\eqref{eq:lem:limit-semigroup} can now be inferred from this inequality and~\eqref{eq:lem:limit-generator}.
\end{proof}

Most converse Lyapunov results for linear systems with unbounded input operators are confined to quadratic Lyapunov functions. In contrast, we next present a converse non-coercive ISS Lyapunov theorem for linear systems with unbounded input operators, guaranteeing the existence of non-quadratic Lyapunov functions:

\begin{theorem}
    \label{thm:converse-lyapunov}
    In Setting~\ref{setting:system}, assume that $(T(t))_{t\ge 0}$ is exponentially stable and analytic and $B\in \calL(U, X_{-\alpha})$ for some $\alpha \in [0,1)$. Then for each integer $n\in [1,\alpha^{-1})$, 
\begin{align}
\label{eq:LF-n-homogeneous}
        V(x):= \int_0^\infty \norm{T(t)x}^n\ dt    
\end{align}
    is a non-coercive $n$-homogeneous  locally Lipschitz continuous ISS Lyapunov function for $\Sigma(A,B)$. In particular, $\Sigma(A,B)$ is $L^q$-$L^q$-ISS for each $q\in [1,\infty)$.
\end{theorem}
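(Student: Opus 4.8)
The plan is to verify directly that $V$ in~\eqref{eq:LF-n-homogeneous} satisfies the definition of a non-coercive $n$-homogeneous ISS Lyapunov function in a dissipative form (Definition~\ref{def:LISS-LF-dissipative}), and then invoke Theorem~\ref{thm:lyapunov-iss-unbounded} to conclude $L^q$-$L^q$-ISS. By Lemma~\ref{lem:converse-lyapunov}, $B$ is $\mathrm C$-admissible and the mild solution is continuous with values in $X$, so both the hypotheses of Theorem~\ref{thm:lyapunov-iss-unbounded} and the standing assumption of Definition~\ref{def:LISS-LF-dissipative} are in place; moreover the integral defining $V$ converges for every $x\in X$ because exponential stability gives $\norm{T(t)x}^n \le M^n e^{-n\omega t}\norm{x}^n$. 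The same bound yields the upper sandwich estimate $0<V(x)\le a_2\norm{x}^n$ with $a_2 = M^n/(n\omega)$ for $x\neq 0$ (positivity is clear since $t\mapsto\norm{T(t)x}$ is continuous and $\norm{T(0)x}=\norm{x}>0$), and $n$-homogeneity, $V(ax)=\modulus{a}^nV(x)$, is immediate from pulling the scalar out of the norm. Local Lipschitz continuity follows from $\bigl|\norm{T(t)x}^n-\norm{T(t)y}^n\bigr| \le n(\max\{\norm{T(t)x},\norm{T(t)y}\})^{n-1}\norm{T(t)(x-y)}$ together with the exponential bound, integrated over $t$.

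The crux is the dissipation inequality~\eqref{DissipationIneq}. Fix $x\in X$, $u\in\mathrm C(\bbR_+,U)$, and write $\phi(h):=\phi(h,x,u)=T(h)x+\Phi_h u$. For small $h>0$ I would split
\[
    V(\phi(h)) - V(x) = \int_0^\infty\!\bigl(\norm{T(t)\phi(h)}^n - \norm{T(t)x}^n\bigr)dt,
\]
and further decompose $T(t)\phi(h) = T(t)T(h)x + T(t)\Phi_h u = T(t+h)x + T(t)\Phi_h u$. The term coming from $T(t+h)x$ produces, after the substitution $s=t+h$, the quantity $\int_h^\infty\norm{T(s)x}^n ds - \int_0^\infty\norm{T(s)x}^n ds = -\int_0^h\norm{T(s)x}^n ds$, which divided by $h$ tends to $-\norm{x}^n$ as $h\downarrow 0$ by continuity of $s\mapsto\norm{T(s)x}$; this is exactly the $-a_3\norm{x}^q$ term with $a_3=1$. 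The correction term is controlled by $|\,\norm{a+b}^n - \norm{a}^n\,| \le n(\norm{a}+\norm{b})^{n-1}\norm{b}$ applied with $a = T(t+h)x$, $b = T(t)\Phi_h u$; so modulo a uniform constant this contributes at most $\int_0^\infty (\norm{T(t+h)x}+\norm{T(t)\Phi_h u})^{n-1}\norm{T(t)\Phi_h u}\,dt$, and after dividing by $h$ and using~\eqref{eq:lem:limit-semigroup} from Lemma~\ref{lem:converse-lyapunov}, $\overline{\lim}_{h\downarrow 0}\norm{T(t)\Phi_h u}/h \le C\norm{A^{-\alpha}Bu(0)}\,t^{-\alpha}e^{-\omega t}$, the $t$-integral of $t^{-\alpha}e^{-\omega t}$ and of its powers up to $t^{-(n-1)\alpha}$ is finite precisely because $n\alpha<1$, i.e. $n<\alpha^{-1}$. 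This is where the hypothesis $n\in[1,\alpha^{-1})$ is used in an essential way.

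The main obstacle is making the interchange of limit ($h\downarrow 0$), $\sup$-limit, and the improper integral rigorous, and in particular bounding the cross terms $(\norm{T(t+h)x} + \norm{T(t)\Phi_h u})^{n-1}\norm{T(t)\Phi_h u}/h$ uniformly in $h\in(0,1)$ by an integrable dominating function so that a Fatou/dominated-convergence argument applies to the $\overline{\lim}$. The key estimates are: $\norm{T(t+h)x}\le Me^{-\omega(t+h)}\norm{x}\le Me^{-\omega t}\norm{x}$ uniformly in $h$, and — from the proof of Lemma~\ref{lem:converse-lyapunov}, cf.~\eqref{eq:lem:upper-estimate} — $\norm{T(t)\Phi_h u}/h \le C t^{-\alpha}e^{-\omega t}\norm{A^{-\alpha}\Phi_h u}/h$ with $\norm{A^{-\alpha}\Phi_h u}/h$ bounded uniformly for $h\in(0,1)$ by $\sup_{s\in[0,1]}\norm{A^{-\alpha}Bu(s)}$ (using $A^{-\alpha}B\in\calL(U,X)$ and $\norm{T(\cdot)}\le M$). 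Expanding the $(n-1)$-th power binomially, every resulting term is dominated by a constant (depending on $x$, $u$, $M$, $C$, $\omega$) times $t^{-k\alpha}e^{-\omega t}$ for some $0\le k\le n-1$, hence integrable over $(0,\infty)$ since $(n-1)\alpha<1$. Once the dissipation inequality~\eqref{DissipationIneq} is established with $a_3 = 1$ and $a_4$ the constant produced above, Theorem~\ref{thm:lyapunov-iss-unbounded} applies verbatim with $q=n$ to give $L^n$-$L^n$-ISS; finally, since the semigroup is analytic, Proposition~\ref{prop:p-implies-q} together with Proposition~\ref{prop:p-q-iss-implies-exponential-stability} upgrades this to $L^q$-$L^q$-ISS for all $q\in(1,\infty)$, and the case $q=1$ follows because $L^1$ inputs on a bounded time interval are covered once we note the estimate in Proposition~\ref{prop:p-q-iss-implies-exponential-stability}(iii) holds — alternatively one simply records that the case $n=1$ is always admissible here since $\alpha^{-1}>1$.
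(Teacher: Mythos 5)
Your proposal follows essentially the same route as the paper: the sandwich bounds, $n$-homogeneity and Lipschitz continuity are handled identically; the dissipation inequality comes from the decomposition $T(t)\phi(h,x,u)=T(t+h)x+T(t)\Phi_h u$, the telescoping of the main term to $-\tfrac1h\int_0^h\norm{T(s)x}^n\,ds\to-\norm{x}^n$, and the estimates \eqref{eq:lem:upper-estimate}--\eqref{eq:lem:limit-semigroup} of Lemma~\ref{lem:converse-lyapunov} for the remainder; and the upgrade to all $q\in[1,\infty)$ uses analyticity via Proposition~\ref{prop:p-implies-q}/Corollary~\ref{cor:p-p-iss-characterization}, just as in the paper. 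The only structural difference is cosmetic: the paper splits into the cases $n=1$ and $n>1$ and expands $(\norm{a}+\norm{b})^n$ binomially, whereas you treat all $n$ at once through $\bigl|\norm{a+b}^n-\norm{a}^n\bigr|\le n(\norm{a}+\norm{b})^{n-1}\norm{b}$. Your domination argument for the interchange of $\overline{\lim}$ and integral is sound; note only that the worst-case power in the uniform-in-$h$ majorant is $t^{-n\alpha}$ (from the term with all $n-1$ factors equal to $\norm{T(t)\Phi_h u}$, times the extra factor $\norm{T(t)\Phi_h u}/h$), not $t^{-(n-1)\alpha}$ --- harmless, since $n\alpha<1$ is exactly the hypothesis.

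One point does need fixing: for $n>1$ the cross term does not land directly in the $a_4\norm{u(0)}_U^n$ slot. In the limit $h\downarrow0$ only the term with no factor of $\norm{T(t)\Phi_h u}$ in $(\norm{T(t+h)x}+\norm{T(t)\Phi_hu})^{n-1}$ survives (the others carry positive powers of $h$), leaving a bound of the form $K\norm{x}^{n-1}\norm{u(0)}_U$. To reach the dissipative form \eqref{DissipationIneq} you must still apply Young's inequality, $K\norm{x}^{n-1}\norm{u(0)}_U\le\delta\norm{x}^n+C_\delta\norm{u(0)}_U^n$, and absorb the $\delta\norm{x}^n$ into the negative term. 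Hence $a_3=1-\delta<1$ for $n>1$ (the paper's $\epsilon$-bookkeeping yields $a_3=1-2^{n-1}(n-1)\epsilon^nM^n/(n\omega)$), not $a_3=1$ as you assert; $a_3=1$ is correct only for $n=1$. This is a routine repair and does not affect the validity of the argument.
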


\begin{proof}
    Due to Lemma~\ref{lem:converse-lyapunov}, we have $\mathrm C$-admissibility of the control operator and continuity of the mild solution. 
    Since $\alpha\in[0,1)$, we have $1 \in [1,\alpha^{-1})$, and thus the first statement of the theorem shows that there is a non-coercive $1$-homogeneous  ISS Lyapunov function.
    Hence, Theorem~\ref{thm:lyapunov-iss-unbounded} ensures that $\Sigma(A,B)$ is $L^1$-$L^1$-ISS. 
    Since $(T(t))_{t \ge 0}$ is analytic, Corollary~\ref{cor:p-p-iss-characterisation} ensures that $\Sigma(A,B)$ is $L^q$-$L^q$-ISS for each $q\in [1,\infty)$.
    In other words, the second assertion is a consequence of the first.

    To prove the claims on $V$, we begin by noting that the  $n$-homogeneity of $V$ is immediate, and its local Lipschitz continuity is a straightforward consequence of the exponential stability of the semigroup.
    
    In addition, $V(x)>0$ for all $x\in X\setminus \{0\}$ and by exponential stability
    \[
        V(x)\le M^n \norm{x}^n \int_0^\infty e^{-\omega n t}\ dt = \frac{M^n}{n\omega} \norm{x}^n
    \]
    for some $M,\omega>0$. In particular, $V$ satisfies the non-coercive sandwich inequality~\eqref{eq:LyapFunk_1Eig_LISS-dissipative} with $a_2:=M^n(n\omega)^{-1}$.

    Next, let us verify the dissipation inequality. 
    Consider arbitrary but fixed $h\in (0,1)$, a function $u\in \mathrm C(\bbR_+, U)$, and $x\in X$. Then
    $\Phi_h u\in X$ as $B$ is $\mathrm C$-admissible. We split the estimation of the Lie derivative into two cases:

    \emph{Case 1: $n=1$}. First, note that,
    \begin{align*}
        V\big(\phi(h,x,u)\big) - V(x) & = V(T(h)x+\Phi_h u) - V(x)\\
                                      & \le \int_0^\infty \norm{T(t+h)x}\ dt-\int_0^\infty \norm{T(t)x}\ dt + \int_0^\infty\norm{T(t)\Phi_h u}\ dt\\
                                      & =\int_h^\infty \norm{T(t)x}\ dt-\int_0^\infty \norm{T(t)x}\ dt + \int_0^\infty\norm{T(t)\Phi_h u}\ dt\\
                                      &=-\int_0^h \norm{T(t)x}\ dt + \int_0^\infty\norm{T(t)\Phi_h u}\ dt.
    \end{align*}  
    Now using the estimate~\eqref{eq:lem:limit-semigroup} in Lemma~\ref{lem:converse-lyapunov} and applying Fatou's lemma, we can find $C>0$ such that
    \[
         \overline{\lim_{h \downarrow 0}}\frac1h\int_0^\infty\norm{T(t)\Phi_h u}\ dt
                \le C \norm{ A^{-\alpha}Bu(0)} \int_0^\infty t^{-\alpha } e^{-\omega  t}\ dt
                = C_1 \norm{ u(0)}_U
    \]
    with $C_1:= C \omega^{-1} \Gamma(1-\alpha) \norm{ A^{-\alpha}B}$, where $\Gamma$ denotes a gamma-function. As a result, 
    \begin{align*}
        \dot{V}_u(x) = \overline{\lim_{h \downarrow 0}}\frac{V\big(\phi(h,x,u)\big) - V(x)}{h} 
                     \le -\norm{x} +C_1 \norm{ u(0)}_U,
    \end{align*}
    where we have used that $\overline{\lim_{h \downarrow 0}}h^{-1}\int_0^h \norm{T(t)x}\ dt=\norm{x}$. In particular, the dissipation inequality~\eqref{DissipationIneq} is fulfilled with $a_3=-1$ and $a_4=C_1$.
    
    \emph{Case 2: $n>1$}. In this case, we adapt the proof of \cite[Proposition~6]{MironchenkoWirth2017}. First, using the binomial inequality, we obtain that
    \begin{align*}
        V\big(\phi(h,x,u)\big)  & = V\big(T(h)x+\Phi_h u\big)\\
                                      & \le \int_0^\infty \big[\norm{T(t+h)x} +\norm{T(t)\Phi_h u}\big]^n\ dt\\
                                      & = \int_0^\infty \norm{T(t+h)x}^n\ dt+\int_0^\infty\norm{T(t)\Phi_h u}^n\ dt\\
                                      &\qquad\qquad+\sum_{k=1}^{n-1} \binom{n}{k} \int_0^\infty \norm{T(t+h)x}^k \norm{T(t)\Phi_hu}^{n-k}\ dt\\
                                      & = \int_h^\infty \norm{T(t)x}^n\ dt+\int_0^\infty\norm{T(t)\Phi_h u}^n\ dt\\
                                      &\qquad\qquad+\sum_{k=1}^{n-1} \binom{n}{k} \int_0^\infty \norm{T(t+h)x}^k \norm{T(t)\Phi_hu}^{n-k}\ dt.
    \end{align*}
    Therefore, the Lie derivative can be estimated as
    \begin{align*}
            \dot{V}_u(x)
                    & = \overline{\lim_{h \downarrow 0}}\frac{V\big(\phi(h,x,u)\big) - V(x)}{h} \\
                    & \le -\overline{\lim_{h \downarrow 0}}\frac1h\int_0^h \norm{T(t)x}^n\ dt+\overline{\lim_{h \downarrow 0}}\frac1h\int_0^\infty\norm{T(t)\Phi_h u}^n\ dt\\
                    &\qquad\qquad+\overline{\lim_{h \downarrow 0}}\frac1h\sum_{k=1}^{n-1} \binom{n}{k} \int_0^\infty \norm{T(t+h)x}^k \norm{T(t)\Phi_hu}^{n-k}\ dt
    \end{align*}
    which gives that
    \begin{equation}
        \label{eq:thm:lie-derivative}
        \dot{V}_u(x) \le -\norm{x}^n +I_1+I_2;
    \end{equation}
    where 
    $
        I_1 := \overline{\lim_{h \downarrow 0}}\frac1h\int_0^\infty\norm{T(t)\Phi_h u}^n\ dt
    $
    and
    \[
        I_2 : = \overline{\lim_{h \downarrow 0}}\frac1h\sum_{k=1}^{n-1} \binom{n}{k} \int_0^\infty \norm{T(t+h)x}^k \norm{T(t)\Phi_hu}^{n-k}\ dt.
    \]
    First of all, $I_1=0$ because for $t>0$, we habe
    \[
        \norm{T(t)\Phi_h u}^n \le  (MC)^n t^{-n\alpha}e^{-n\omega t}\norm{A^{-\alpha}B}^n \norm{u}^n_{L^\infty([0,1],U)}h^n
    \]
    by~\eqref{eq:lem:upper-estimate} and so  $I_1$ must vanish as $n>1$.



    
    To estimate $I_2$, we again use~\eqref{eq:lem:upper-estimate} to note for each $1\le k\le n-1$ that
    \begin{align*}
        \norm{T(t+h)x}^k \norm{T(t)\Phi_hu}^{n-k} \le K \norm{x}^k \norm{u}^{n-k}_{L^\infty([0,1],U)} t^{-(n-k)\alpha}e^{-(n-k)\omega t}
    \end{align*}
    with $K:=C^{n-k} M^n \norm{A^{-\alpha}B}^{n-k}$
    for all $t>0$.
    Since $(n-k)\alpha<1$ and $\omega>0$, the function $t\mapsto t^{-(n-k)\alpha}e^{-(n-k)\omega t}$ lies in $L^1(0,\infty)$. This enables us to employ dominated convergence theorem and~\eqref{eq:lem:limit-semigroup} to obtain for each $\epsilon>0$ that
    \begin{align*}
        I_2 &\le \overline{\lim_{h \downarrow 0}}\sum_{k=1}^{n-1}\binom{n}{k}\int_0^\infty \norm{T(t+h)x}^k C^{n-k}\norm{A^{-\alpha}Bu(0)}^{n-k} t^{-(n-k)\alpha}e^{-(n-k)\omega t}\ dt\\
        &= \overline{\lim_{h \downarrow 0}}\sum_{k=1}^{n-1}\binom{n}{k}\int_0^\infty \epsilon^{k}\norm{T(t+h)x}^k \frac{C^{n-k}}{\epsilon^{k}}\norm{A^{-\alpha}Bu(0)}^{n-k} t^{-(n-k)\alpha}e^{-(n-k)\omega t}\ dt\\
        &\le 2^{n-1} \overline{\lim_{h \downarrow 0}}\sum_{k=1}^{n-1} \int_0^\infty \big[\epsilon^n\norm{T(t+h)x}^n + C^n \epsilon^{-\frac{kn}{n-k}} \norm{A^{-\alpha}Bu(0)}^{n} t^{-n\alpha}e^{-n\omega t}\big]\ dt\\
        &\le 2^{n-1}\overline{\lim_{h \downarrow 0}} \sum_{k=1}^{n-1} \int_0^\infty \big[\epsilon^n\norm{T(t+h)x}^n + C^n \epsilon^{-1} \norm{A^{-\alpha}Bu(0)}^{n} t^{-n\alpha}e^{-n\omega t}\big]\ dt\\
    &\le 2^{n-1}(n-1) \left[\frac{\epsilon^n M^n}{n\omega}\norm{x}^n +\frac{C^n\Gamma(1-n\alpha)  }{\epsilon (n\omega)^{1-n\alpha}  } \norm{A^{-\alpha}B}^n\norm{u(0)}_U^{n} \right];
    \end{align*}
    where the second inequality uses that 
    \[
        \binom{n}{k}a^kb^{n-k} \le (a+b)^n \le 2^{n-1}(a^n+b^n),\quad a,b\ge 0,
    \]
    and by substitution we have $\int_0^\infty t^{-n\alpha}e^{-n\omega t}dt = \Gamma(1-n\alpha) (n\omega)^{n\alpha-1}$.
    
    Choosing $\epsilon$ such that
    \[
        a_3:=1-\frac{2^{n-1}(n-1)\epsilon^n M^n}{n\omega} >0
    \]
    and substituting in~\eqref{eq:thm:lie-derivative}, we get
    \[
        \dot{V}_u(x) \le -a_3\norm{x}^n +a_4 \norm{u(0)}_U^n
    \]
    for some $a_4>0$, again verifying the dissipation inequality~\eqref{DissipationIneq}.
\end{proof}

\begin{remark}
    It is interesting that for $B\in \calL(U, X_{-\alpha})$ with any $\alpha \in [0,1)$, we can construct a 1-homogeneous non-coercive ISS Lyapunov function. But to be able to construct a 2-homogeneous ISS Lyapunov function of the type \eqref{eq:LF-n-homogeneous}, we need to request the validity of a condition $B\in \calL(U, X_{-\alpha})$ with $\alpha<\frac{1}{2}$ (which is stronger than $L^2$-admissibility).     
\end{remark}

\section*{Acknowledgements} 
    The authors thank Felix Schwenninger for various fruitful discussions.
    
    Part of the work was done while the first author was funded by the Deutsche Forschungsgemeinschaft (DFG, German Research Foundation) -- 523942381. The second author has been supported by the German Research Foundation (grant MI 1886/3-1).
    The article was initiated during a pleasant visit of the first author to Alpen-Adria-Universität Klagenfurt, Austria in March 2024.
    The first author is indebted to COST Action 18232 for financial support for this visit.

\bibliographystyle{plainurl}
\bibliography{literature}

\end{document}